\documentclass[12pt,a4paper]{article}
\usepackage{amssymb,amsmath,amsthm}

\newcommand\cA{{\mathcal A}}

\newcommand\cD{{\mathcal D}}

\newcommand\cL{{\mathcal L}}

\newcommand\cP{{\mathcal P}}

\theoremstyle{plain}
\newtheorem{theorem}{Theorem}[section]
\newtheorem{lemma}[theorem]{Lemma}

\newtheorem{proposition}[theorem]{Proposition}

\theoremstyle{definition}

\newtheorem{claim}[theorem]{Claim}

\newtheorem*{remark}{Remark}

\newcommand\lref[1]{Lemma~\ref{lem:#1}}

\newcommand\cref[1]{Corollary~\ref{cor:#1}}

\newcommand\pref[1]{Proposition~\ref{prop:#1}}


\textheight=8in \textwidth=6.5in \topmargin=0.3in \oddsidemargin=0in
\evensidemargin=0in
\title{Towards a de Bruijn-Erd\H os theorem in the $L_1$-metric}

\author{
Ida Kantor\thanks{Computer Science Institute of Charles University,
Malostransk\'{e} n\'{a}m. 25, 118 00  Praha 1, Czech Republic.
 E-mail: ida@iuuk.mff.cuni.cz. Research supported by GA\v{C}R grant number P201/12/P288 and partially done while the first author visited the R\'{e}nyi Institute in 2011. This author
would like to acknowledge the generous support of the institute and
of the grant HNSF PD-83586 which funded this visit.} \and
Bal\'azs Patk\'os\thanks{Alfr\'ed R\'enyi Institute of Mathematics, P.O.B. 127, Budapest H-1364, Hungary. Email: patkos@renyi.hu. Research supported by
    Hungarian National Scientific Fund, grant number: PD-83586 and the J\'anos Bolyai Research Scholarship of the Hungarian Academy of Sciences.}
}

\begin{document}

\maketitle
\begin{abstract}
A well-known theorem of de Bruijn and Erd\H{o}s states that any set of $n$ non-collinear points in the plane determines at least $n$ lines. Chen and Chv\'{a}tal asked whether an analogous statement holds within the framework of finite metric spaces, with lines defined using the notion of {\em betweenness}. 

In this paper, we prove that the answer is affirmative for sets of $n$ points in the plane with the $L_1$ metric, provided that no two points share their $x$- or $y$-coordinate. In this case, either there is a line that contains all $n$ points, or $X$ induces at least $n$ distinct lines.

If points of $X$ are allowed to share their coordinates, then either there is a line that contains all $n$ points, or $X$ induces at least $n/37$ distinct lines.
\end{abstract}

\section{Lines in metric spaces}
Two well-known results are known under the name ``de Bruijn--Erd\H{o}s theorem''. One of them, published in~\cite{deBE48} in 1948, states that every set of $n$ points in the plane is either collinear or it determines at least $n$ distinct lines. 

The notion of a {\em line} can be extended naturally into an arbitrary metric space. 
If $(V,\rho)$ is an arbitrary metric space and $a,b,x\in V$, we say that $x$ is {\em between} the points $a$ and $b$ if 
\[
 \rho(a,b)=\rho(a,x)+\rho(x,b).
\]
Following the convention (established by Menger~\cite{Menger28})
of writing $[axb]$ to denote that $x$ lies between $a$ and $b$, the {\em line determined by the points $a$ and $b$}, denoted $\langle a,b\rangle$, consists of points in the set
\begin{equation}\label{def:line}
 \{x;[xab]\} \cup \{a\} \cup \{x;[axb]\} \cup \{b\} \cup \{x;[abx]\}.  
\end{equation}

We say that $a,b$ are the {\em defining points} of this line.

The situation is now slightly more complicated than with ordinary lines. For instance, it can happen that one line is a proper subset of another line. 

Chen and Chv\'{a}tal asked in~\cite{ChenChvatal08}
whether a statement analogous to the de Bruijn--Erd\H{o}s theorem holds in this setting as well. More precisely, they asked whether the following statement is true:

\begin{center}
{\em In an arbitrary metric space on $n$ points, either there are points $p,q$ such that the line $\langle p,q\rangle$ contains all points, or there are at least $n$ distinct lines.}
\end{center}

Despite many partial results, this question remains open. 
In this paper, we concentrate on a scenario that resembles the original de Bruijn--Erd\H{o}s situation. The points lie in the plane, but the usual Euclidean metric is replaced with the~$L_1$, or Manhattan, metric, defined by $d((u_1,u_2),(v_1,v_2))=|u_1-v_1|+|u_2-v_2|$. 

We encounter two very different kinds of lines in this case: those determined by two points that share their $x$- or $y$-coordinate, and those determined by two points that differ in both coordinates. If a set contains pairs of points that share a coordinate, we call it {\em degenerate}, otherwise it is {\em non-degenerate}.

Our first main result is that the answer to the Chen--Chv\'{a}tal question is affirmative for non-degenerate sets.
\begin{theorem}
\label{thm:main1}
In any metric space $(X,d)$ where $X$ is a non-degenerate finite set of points in the plane and $d$ is the $L_1$ metric restricted to $X$, either there is a line that contains all points of $X$, or there are at least 
$n$ distinct lines. 
\end{theorem}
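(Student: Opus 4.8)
The plan is to strip the problem down to a statement about a single permutation and then induct on $n=|X|$.

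\medskip
\noindent\emph{Reduction and the order picture.} In the $L_1$ metric, $[xyz]$ holds precisely when $y$ lies in the closed axis-parallel box spanned by $x$ and $z$; hence whether a betweenness relation holds among three points depends only on the relative order of their $x$-coordinates and on that of their $y$-coordinates. Relabelling, we may therefore assume $X=\{(i,\sigma(i)):1\le i\le n\}$ for a permutation $\sigma$ of $\{1,\dots,n\}$. Let $\le$ be the coordinatewise order and $\le'$ its reflection ($u\le' v$ iff $u_1\le v_1$ and $u_2\ge v_2$); every pair of distinct points is comparable in exactly one of $\le,\le'$. One checks that if $a,b$ are $\le$-comparable then $\langle a,b\rangle$ is exactly the set of points of $X$ that are $\le$-comparable to both $a$ and $b$, and symmetrically for $\le'$. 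Thus every line is a ``positive line'' (an intersection of two $\le$-comparability cones) or a ``negative line''; a two-point line corresponds to a $\le$-minimal $a$ and a $\le$-maximal $b$ with $a<b$ and no point of $X$ strictly inside the box $[a,b]$, or the $\le'$-version; and $X$ has a line containing everything iff the comparability graph of $\le$ has two universal vertices or two isolated vertices --- which is exactly the situation our hypothesis rules out.

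\medskip
\noindent\emph{Induction via a two-point line.} The core of the argument is the lemma: under our hypotheses, and once $n$ exceeds a small threshold, $X$ has a two-point line $\{a,b\}$. Given it, delete $a$ and apply the inductive hypothesis to the (still non-degenerate) set $X'=X\setminus\{a\}$. If $X'$ has a line containing all of $X'$, then --- since $X$ has no such line --- $X$ has a line $m$ with exactly $n-1$ points; this sub-case is handled separately. Otherwise $X'$ has at least $n-1$ lines. Pick a defining pair for each line of $X'$ and send that line to the line of $X$ determined by the same pair; restricting back to $X'$ undoes this, so it is an injection from the lines of $X'$ into the lines of $X$, and every line in its image is defined by a pair of points of $X'$. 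The two-point line $\{a,b\}$ is not in the image: a line defined by a pair of points of $X'$ and equal to the two-element set $\{a,b\}$ would have both its defining points in $\{a,b\}\cap X'=\{b\}$, which is impossible. Hence $X$ has at least $n$ lines.

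\medskip
\noindent\emph{The $(n-1)$-point line sub-case.} Suppose $X$ has a line $m$ with $|m|=n-1$ and let $q$ be the lone point of $X$ outside $m$. The plan is to show that the $n-1$ lines $\langle q,r\rangle$, $r\in m$, are pairwise distinct; none of them equals $m$ since $q\notin m$, so together with $m$ we again get $\ge n$ lines. Here one leans on the fact that a line with $n-1$ points is extremely rigid --- essentially every point of $X$ is comparable to the pair defining $m$ --- which makes the set of points lying between $q$ and $r$ but incomparable (in the relevant order) to $r'$ nonempty whenever $r\ne r'$, forcing $\langle q,r\rangle\ne\langle q,r'\rangle$.

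\medskip
\noindent\emph{Where the difficulty lies.} The real obstacle is the existence of a two-point line. The naive attempt --- take a $\le$-minimal $a$ and a $\le$-maximal $b$ with $a<b$ minimizing the number of points of $X$ in the box $[a,b]$, then try to shrink the box --- does not work: a point inside the box need be neither $\le$-minimal nor $\le$-maximal, and replacing an endpoint by it can enlarge the box. Instead one must run the analysis along the four ``staircases'' of extreme points --- the $\le$-minimal points, the $\le$-maximal points, and the two $\le'$-analogues --- and show that if every candidate box (and every candidate $\le'$-box) contains an interior point of $X$, then $\sigma$ is forced into a nested-block pattern that produces two universal or two isolated vertices in the comparability graph, i.e. a line containing all of $X$, against the hypothesis. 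Carrying the two orders and the extreme points they share through this argument, and disposing by hand of the small configurations near the threshold (and those in which $X$ is close to a single $\le$- or $\le'$-chain), is the main work; should a two-point line fail to exist in general, the same structural information instead feeds a direct induction that removes an extreme point of $X$, whose only hard case is again to exhibit a line through the reinserted point that was not already present.
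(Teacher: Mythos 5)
Your outline replaces the paper's construction (layers of the dominance order, explicit families of increasing and decreasing lines, and four partial orders to remove a deficiency of $2$) with an induction on $n$, but both load-bearing steps of that induction are missing or wrong. First, the induction never gets off the ground without your key lemma that every non-degenerate set with no universal line contains a two-point line, and you do not prove it: you describe it as ``the main work,'' sketch only why the naive minimization fails, and even hedge that the lemma might be false, in which case you appeal to an unspecified ``direct induction'' whose ``only hard case'' is again left open. That lemma (or its replacement) is precisely where the difficulty of Theorem~\ref{thm:main1} lives, so as it stands the proposal is a plan, not a proof. The lifting step itself (choose a defining pair for each line of $X'=X\setminus\{a\}$ and pass to the line of $X$ with the same defining pair) is fine and injective, but it only helps once the two unproven ingredients are supplied.

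Second, the $(n-1)$-point-line sub-case is false as you state it: the lines $\langle q,r\rangle$ with $r\in m$ need not be pairwise distinct, so the claimed ``rigidity'' does not hold. Take $X=\{q,r_1,r_2,u,v\}$ with $q=(0,0)$, $r_1=(1,1)$, $r_2=(2,2)$, $u=(0.5,10)$, $v=(10,0.5)$. This set is non-degenerate and has no line containing all five points, while $m=\langle u,v\rangle$ contains exactly the four points other than $q$ (both $r_1$ and $r_2$ lie in the box spanned by $u,v$). However, $\langle q,r_1\rangle\cap X=\{q,r_1,r_2\}=\langle q,r_2\rangle\cap X$, since no point of $X$ lies in the regions where the two lines differ. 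So in this sub-case the family $\{m\}\cup\{\langle q,r\rangle: r\in m\}$ can have fewer than $n$ members, and a genuinely different argument is needed (the paper avoids this situation altogether by producing, for each layer $\cA_i$ with $i>0$ and for $\cA_0\cup\cA_1$, explicit distinct lines via the properties $\mathfrak{D}_1$--$\mathfrak{D}_3$ and $\mathfrak{I}_1$--$\mathfrak{I}_4$ and by switching among the four orders $\preceq_1,\dots,\preceq_4$).
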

In the the special case of degenerate sets we can prove a linear lower bound on the number of lines.
\begin{theorem}
\label{thm:main2}
In any metric space $(X,d)$ where $X$ is a finite set of points in the plane and $d$ is the $L_1$ metric restricted to $X$, either there is a line that contains all points of $X$, or there are at least 
$n/37$ distinct lines.
\end{theorem}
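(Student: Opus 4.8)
The plan is to understand the structure of lines in the $L_1$-plane well enough to reduce the degenerate case to something close to the non-degenerate Theorem~\ref{thm:main1}, paying a constant factor for the degeneracies. First I would carefully classify the lines $\langle a,b\rangle$ according to the relative position of $a$ and $b$: (i) $a$ and $b$ share an $x$-coordinate (a ``vertical pair''), (ii) $a$ and $b$ share a $y$-coordinate (a ``horizontal pair''), and (iii) $a$ and $b$ differ in both coordinates (a ``diagonal pair''). For a diagonal pair, $[axb]$ holds precisely when $x$ lies in the closed axis-aligned rectangle with corners $a$ and $b$, so $\langle a,b\rangle$ is the union of that rectangle together with two quarter-plane ``wings'' emanating from $a$ and from $b$ in the two coordinate directions pointing away from the rectangle; this matches the kind of line analyzed in the non-degenerate case. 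For a vertical pair $a=(c,y_1),\,b=(c,y_2)$, the line $\langle a,b\rangle$ is the whole vertical strip... wait, more precisely it is the set of all points $x=(u,v)$ with $u=c$ arbitrary $v$? No --- $[axb]$ needs $|y_1-v|+|y_2-v|=|y_1-y_2|$ and $|u-c|+|u-c|=0$, forcing $u=c$; and $[xab],[abx]$ give the rest of the vertical line $u=c$. So a vertical pair spans exactly the geometric vertical line through them, and similarly horizontal pairs span horizontal lines. Thus degeneracies create a bounded supply of ``axis-lines,'' but also pollute the diagonal lines, since a point sharing a coordinate with $a$ or $b$ can fall on a wing-boundary.

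The key step is a case analysis on how much degeneracy $X$ has. Let the points of $X$ have $x$-coordinates taking $r$ distinct values and $y$-coordinates taking $s$ distinct values. If $r$ and $s$ are both large --- say both at least $n/C$ for a suitable constant $C$ --- then I would argue that one can pass to a large non-degenerate subset $X'\subseteq X$ (choosing one point per occupied row and column greedily, or using a bipartite-matching / Dilworth-type argument to get a non-degenerate ``transversal'' of size $\min(r,s)\ge n/C$), apply Theorem~\ref{thm:main1} to $X'$ to get $\ge n/C$ lines of $X'$, and then check that distinct lines of $X'$ extend to distinct lines of $X$ (a line of $X$ restricted to $X'$ is contained in a line of $X'$, and conversely each diagonal line of $X'$ is the trace of a diagonal line of $X$ with the same two defining points), so $X$ has at least $n/C$ lines unless some line of $X$ contains all of $X'$, which then has to be handled separately. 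If instead $r$ is small --- many points share few columns --- then the points are clustered on few vertical lines; I would then count lines using the vertical structure directly: each occupied column with $\ge 2$ points gives its own axis-line, and within a heavily populated column the points are genuinely one-dimensional, so pairs from different columns generate many distinct diagonal lines (a diagonal line through $a$ in column $i$ and $b$ in column $j\ne i$ is essentially determined by $a$ and $b$, and there are $\Theta(n^2)$ such pairs falling into far more than $n$ equivalence classes unless everything collapses onto a single line). The symmetric argument handles small $s$.

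The main obstacle I anticipate is controlling the failure mode in which two different defining pairs give the same line: in the $L_1$-metric many distinct pairs can generate identical lines (e.g. any two points inside the same rectangle with the same two extreme corners, or points related by the wing structure), so a naive $\binom{n}{2}$-pairs count is far too lossy, and the degenerate points make this worse by adding coincidences along wing boundaries. Handling this is exactly where the constant $37$ will come from: I expect the proof to isolate a bounded number of ``bad'' configurations (all-on-one-axis-line, a single dominating diagonal line with its two wings absorbing almost everything, a bounded number of parallel axis-lines, etc.), show that outside these the line-count is comfortably $\ge n$, and in each bad case exhibit $\ge n/37$ lines by an explicit local construction --- typically by finding within $X$ a moderately large non-degenerate or one-dimensional sub-configuration whose de~Bruijn--Erd\H{o}s count (via Theorem~\ref{thm:main1} or the classical one-dimensional fact that $k$ collinear points in a metric space give $\ge k$ or $1$ lines... in fact $\binom{k}{2}$-ish many here) survives the inclusion into $X$. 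The bookkeeping of which lines of the sub-configuration remain distinct in $X$, and the optimization of the constant across the cases, is the part that will require real care rather than a slick argument.
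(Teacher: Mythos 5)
Your structural description of the ``axis'' lines is wrong, and the error is load-bearing. For a vertical pair $a=(c,y_1)$, $b=(c,y_2)$ with $y_1<y_2$ and a point $x=(u,v)$, the condition $[xab]$ reads $|u-c|+|v-y_2|=|u-c|+|v-y_1|+(y_2-y_1)$; the $|u-c|$ terms cancel, so $[xab]$ holds for \emph{every} point with $v\le y_1$, whatever $u$ is. Hence $\langle a,b\rangle$ is the vertical segment together with the two half-planes $\{(u,v):v\le y_1\}$ and $\{(u,v):v\ge y_2\}$, not the geometric line $u=c$ (this is the description given in Section~\ref{sec:monot} for horizontal pairs). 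Under your ``thin'' picture all pairs inside one column would define a single line, so when the points occupy few columns you would get only about $r$ axis-lines; and your claim that ``each occupied column with $\ge 2$ points gives its own axis-line'' is also false under the correct picture (the four corners of an axis-parallel rectangle: both vertical pairs define the same line). The correct, ``fat'' description is exactly what rescues the heavily degenerate case: two vertical pairs give distinct lines whenever their $y$-coordinate pairs differ, or when they coincide but some point of $X$ lies strictly between them in one of the columns (\lref{horiz} and Lemma~\ref{dist}), so a single column with $m\ge 5$ points already yields $\binom{m-1}{2}\ge m$ distinct lines from its non-consecutive pairs (Lemma~\ref{vert}). Lacking this, your small-$r$ case rests on the unproved assertion that $\Theta(n^2)$ cross-column pairs fall into more than $n$ distinct lines --- precisely the multiplicity control you yourself identify as the main obstacle, and nothing in the proposal supplies it.

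The dichotomy you propose is also not tenable as stated. Having many distinct $x$-values and many distinct $y$-values does not produce a non-degenerate transversal of size $\min(r,s)$: the largest non-degenerate subset is a maximum matching in the bipartite graph between occupied $x$-values and $y$-values, which can have size $2$ even when $r,s=\Theta(n)$ (half the points on one vertical line, half on one horizontal line); in such configurations the theorem survives only through the within-column lines discussed above. The paper's dichotomy is pointwise rather than global: either a constant fraction of the points share their $x$- (resp.\ $y$-) coordinate with at least four other points, which triggers the axis-line count, or after deleting at most three quarters of the remaining points one obtains a non-degenerate subset of linear size. Finally, even where a large non-degenerate subset $X'$ exists, applying \tref{main1} to the metric space $(X',d)$ leaves open the case that a single line contains all of $X'$; the paper does not apply \tref{main1} to a subset but proves \lref{notwo1} by running the construction of Section~\ref{sec:many} relative to the ambient set $X$, where the ``special line'' through a point of $X\setminus Y$ handles exactly that degeneracy. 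Your observation that distinct lines of the sub-metric-space remain distinct in $X$ is correct, since betweenness is a three-point condition, but the all-of-$X'$-on-one-line case is left by you as ``to be handled separately,'' and it is not routine.
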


It is easy to see that the de Bruijn--Erd\H{o}s theorem is tight: consider $n-1$ points on a line, and one additional point that does not belong to this line. 
A similar construction 
works in the $L_1$ case as well to show that Theorem~\ref{thm:main1} is tight. Many other similarities between lines in $L_2$ and $L_1$ metrics in the plane can be found, but in many important aspects the situations are fundamentally different. 

Another important metric is $L_{\infty}$, defined by 
\[d_{\infty}((u_1,\dots,u_k),(v_1,\dots,v_k))=\max_{1\leq i\leq k}{|u_i-v_i|}.\]

Let $\phi$ denote the rotation around the origin by 45 degrees. Inspecting the $L_{\infty}$ lines, we see that for any pair of points  $v_1, v_2$, we have $\langle\phi(v_1),\phi(v_2)\rangle_\infty=\{\phi(u): u \in \langle v_1,v_2\rangle_1\}$. That is, for any finite set of points in the plane, the $L_{\infty}$-lines correspond to the $L_1$-lines of the rotated set (we will describe what $L_1$-lines look like in Section~\ref{sec:monot}). 
The following theorem is then a direct corollary of Theorems~\ref{thm:main1} and~\ref{thm:main2}.

\begin{theorem}
\label{thm:linfty}
In any metric space $(X,d)$ where $X$ is a finite set of points in the plane and $d$ is the $L_{\infty}$ metric restricted to $X$, either there is a line that contains all points of $X$, or there are at least 
$n/37$ distinct lines.
If, moreover, we have $|u_1-v_1|\neq|u_2-v_2|$ for every two points $(u_1,u_2)$, $(v_1,v_2)$ of $X$, then there are at least $n$ distinct lines.
\end{theorem}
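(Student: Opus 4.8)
The plan is to obtain this as a change-of-coordinates corollary of Theorems~\ref{thm:main1} and~\ref{thm:main2}, using the rotation $\phi$ introduced above. Writing $\phi(x,y)=\tfrac{1}{\sqrt2}(x-y,\,x+y)$, a one-line computation with the identity $\max(|a-b|,|a+b|)=|a|+|b|$ shows that $d_\infty(\phi(u),\phi(v))=\tfrac{1}{\sqrt2}\,d_1(u,v)$ for all $u,v$ in the plane. Betweenness, and hence the set of lines, is unchanged by an isometry and by a global rescaling of the metric, so if we put $X':=\phi^{-1}(X)$ then the map $\langle p,q\rangle_1\mapsto\langle\phi(p),\phi(q)\rangle_\infty$ is a bijection from the $L_1$-lines of $X'$ onto the $L_\infty$-lines of $X$, and it carries an $L_1$-line that contains all of $X'$ to an $L_\infty$-line that contains all of $X$.

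Granting this dictionary, the first assertion follows by applying Theorem~\ref{thm:main2} to the $n$-point set $X'$ equipped with the $L_1$ metric: either some $L_1$-line contains all of $X'$, and then the corresponding $L_\infty$-line contains all of $X$, or $X'$ induces at least $n/37$ distinct $L_1$-lines, which transport to at least $n/37$ distinct $L_\infty$-lines of $X$.

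For the stronger conclusion under the extra hypothesis, the first thing I would verify is that ``$|u_1-v_1|\neq|u_2-v_2|$ for every two points $(u_1,u_2),(v_1,v_2)$ of $X$'' says precisely that $X'$ is non-degenerate. Indeed $\phi^{-1}(x,y)=\tfrac{1}{\sqrt2}(x+y,\,y-x)$, so for two points of $X$ with coordinate differences $a=u_1-v_1$, $b=u_2-v_2$, the images in $X'$ have equal $x$-coordinates iff $a=-b$ and equal $y$-coordinates iff $a=b$; hence no two points of $X'$ share a coordinate exactly when $|a|\neq|b|$ for all such pairs. Under this hypothesis Theorem~\ref{thm:main1} applies to $X'$ and yields, via the same dictionary, either an $L_\infty$-line through all of $X$ or at least $n$ distinct $L_\infty$-lines of $X$.

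Since everything reduces to a rigid motion of the plane, I do not expect any real obstacle here; the only points needing care are the two short verifications above — that the $45^\circ$ rotation intertwines the $L_1$ and $L_\infty$ metrics up to the scalar $1/\sqrt2$, and that ``non-degenerate after rotating back'' is equivalent to the coordinate inequality in the statement.
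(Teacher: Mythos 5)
Your proposal is correct and follows exactly the paper's route: the paper likewise derives Theorem~\ref{thm:linfty} as a direct corollary of Theorems~\ref{thm:main1} and~\ref{thm:main2} via the $45^\circ$ rotation $\phi$, which intertwines $L_1$- and $L_\infty$-betweenness. Your two explicit verifications (the metric identity $d_\infty(\phi(u),\phi(v))=\tfrac{1}{\sqrt2}d_1(u,v)$ and the equivalence of $|u_1-v_1|\neq|u_2-v_2|$ with non-degeneracy of the rotated set) are exactly the details the paper leaves implicit, and both are accurate.
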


To close this section, let us list the most important results that are known regarding the Chen--Chv\'{a}tal question. In~\cite{ChenChvatal08}, Chen and Chv\'{a}tal proved that in every metric space on $n$ points, either some line contains all points, or there are at least $\lg n$ distinct lines. Chiniforooshan and Chv\'{a}tal proved in~\cite{ChiniChvatal11} that 
\begin{itemize}
 \item in every metric space induced by a connected graph, either there is a line containing all points, or there are $\Omega(n^{2/7})$ distinct lines,
\item in every metric space on $n$ points, there are $\Omega((n/\rho)^{2/3})$ distinct lines, where $\rho$ is the ratio between the larges distance and the smallest nonzero distance, and
\item in every metric space on $n$ points where every nonzero distance equals 1 or 2, there are $\Omega(n^{4/3})$ lines.
\end{itemize}
The last result implies an affirmative answer to the Chen--Chv\'{a}tal question for metric spaces with nonzero distances equal to 1 or 2, as long as $n$ is large enough. In~\cite{Chvatal2}, Chv\'{a}tal proved a variation on this result---he showed that the answer is affirmative  for such metric spaces with any value of $n$. The authors of~\cite{BBCCCC} proved that this is the case also for metric spaces induced by connected chordal graphs. In~\cite{BBCCC2}, the same authors generalize the de Bruijn--Erd\H{o}s theorem in a different direction.
Several results dealing with lines in metric spaces induced by graphs were also proved by Jir\'{a}sek and Klav\'{i}k in~\cite{jirasekKlavik}. 

As an example of a result that does not hold in this setting, let us mention the Sylvester--Gallai theorem, conjectured by Sylvester in~\cite{Sylvester} and proved by Gallai many years later (see~\cite{Erdos1982} for a history of the problem). This classic theorem of ordered geometry states that every set of points in the plane is either collinear, or it contains two points such that the line passing through them contains no other points of the set. De Bruijn and Erd\H{o}s observed in~\cite{deBE48} that their theorem follows from this result. In~\cite{Chvatal04}, Chv\'{a}tal provided an example showing that the Sylvester--Gallai theorem is no longer necessarily true in general metric spaces. He conjectured, however, that if we consider the line $\langle a,b\rangle$ to be the recursive closure of~(\ref{def:line}) instead of~(\ref{def:line}) itself, then the statements holds in arbitrary finite metric space. This conjecture was verified by Chen in~\cite{Chen76}.

In the rest of this paper, we will assume the hypotheses of Theorem~\ref{thm:main2}, i.e., $X$ will be a finite set of points in the plane equipped with the $L_1$ metric.

\section{Monotone sequences of points}\label{sec:monot}
In this section, we will see that long increasing or decreasing sequences of points guarantee the existence of many lines. 

We will denote points by lowercase letters $p,q,\dots$ or by pairs of coordinates, whatever is more convenient at the moment.


We say that a pair of points $(x_1,y_1)$, $(x_2,y_2)$ is  
\begin{itemize}
\item
\textit{increasing} if $(x_1-x_2)(y_1-y_2)>0$, 
\item
\textit{decreasing} if $ (x_1-x_2)(y_1-y_2)<0$,
\item
\textit{horizontal} if $y_1=y_2$,
\item
\textit{vertical} if $x_1=x_2$.
\end{itemize}


If $(x_1,y_1),(x_2,y_2)$ is an increasing pair with $x_1<x_2$, the line determined by these points consists of points in the (closed) rectangle determined by the two points and of points lying in the two quarter-planes $\{(x,y); x\leq x_1, y\leq y_1\}$ and $\{(x,y); x\geq x_2, y\geq y_2\}$. The line determined by a horizontal pair $(x_1,y_1),(x_2,y_1)$ consists of points $(x,y_1)$ with $x_1\leq x\leq x_2$ and of points in the two half-planes $\{(x,y);  x\leq x_1\}$ and $\{(x,y);  x\geq x_2\}$. 
 Lines determined by decreasing or vertical pairs look similar.

We say that $p_1,p_2,...,p_k$ form an {\em increasing sequence} if every pair of points from the sequence is increasing. {\em Decreasing sequences} are defined analogously.

Let us start with two easy observations.
\begin{proposition}
\label{prop:mon} If $p_1,p_2$ is an increasing pair and $p \notin \langle p_1,p_2\rangle$, then either $p,p_1$ or $p,p_2$ is a decreasing pair. Also, if $p_1,...,p_k$ is an increasing sequence and the pair $p,p_j$ is decreasing, then $p_i \notin \langle p,p_j\rangle$ for all $i \ne j$.
\end{proposition}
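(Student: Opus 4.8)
The plan is to argue both claims directly from the explicit description of $L_1$-lines determined by increasing and decreasing pairs given just above the statement. For the first claim, let $p_1 = (x_1,y_1)$, $p_2 = (x_2,y_2)$ with $x_1 < x_2$ and $y_1 < y_2$ (increasing), and suppose $p = (x,y) \notin \langle p_1,p_2\rangle$. The line $\langle p_1,p_2\rangle$ is the union of the closed rectangle $[x_1,x_2]\times[y_1,y_2]$ together with the two quarter-planes $Q_1 = \{x\le x_1,\ y\le y_1\}$ and $Q_2 = \{x\ge x_2,\ y\ge y_2\}$. So $p$ lies outside all three regions. The complement of this union splits the plane into the two "off-diagonal" quarter-planes $\{x < x_1,\ y > y_1\}$ and $\{x > x_2,\ y < y_2\}$, plus the side strips; I would check by a short case analysis on the position of $p$ relative to $x_1,x_2$ and $y_1,y_2$ that in every such region either $p,p_1$ or $p,p_2$ has coordinate-differences of opposite sign, i.e.\ is a decreasing pair. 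The cleanest way to organize this: if $x \le x_1$ then (since $p \notin Q_1$) we must have $y > y_1$, so $p,p_1$ is decreasing; if $x \ge x_2$ then (since $p \notin Q_2$) $y < y_2$, so $p,p_2$ is decreasing; and if $x_1 < x < x_2$ then, since $p$ is not in the rectangle, either $y > y_2$ (making $p,p_1$ decreasing) or $y < y_1$ (making $p,p_2$ decreasing). That exhausts all cases.

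For the second claim, suppose $p_1,\dots,p_k$ is an increasing sequence and $p,p_j$ is a decreasing pair; I want $p_i \notin \langle p,p_j\rangle$ for $i \ne j$. Write $p = (a,b)$ and $p_j = (x_j,y_j)$; say WLOG $a < x_j$ and $b > y_j$ (the decreasing case). The line $\langle p,p_j\rangle$ determined by a decreasing pair is (by the analogue of the description above for decreasing pairs) the closed rectangle $[a,x_j]\times[y_j,b]$ together with the two quarter-planes $\{x \le a,\ y \ge b\}$ and $\{x \ge x_j,\ y \le y_j\}$. Now take any $i \ne j$. Since $p_i,p_j$ is increasing, $p_i$ lies in the open quadrant $\{x > x_j,\ y > y_j\}$ or the open quadrant $\{x < x_j,\ y < y_j\}$ relative to $p_j$. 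In either case I claim $p_i$ avoids all three pieces of $\langle p,p_j\rangle$: a point with $x > x_j$ and $y > y_j$ is not in the rectangle (it has $x > x_j$), not in $\{x \le a, y\ge b\}$ (it has $x > x_j > a$), and not in $\{x \ge x_j, y \le y_j\}$ (it has $y > y_j$); symmetrically for the other quadrant. Hence $p_i \notin \langle p,p_j\rangle$.

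The only mild subtlety, and the step I would be most careful with, is bookkeeping the boundary cases—strict versus non-strict inequalities—since the rectangle and quarter-planes in the line are closed, while "increasing" and "decreasing" are defined by strict inequalities on the product of coordinate differences. In particular I must make sure that in the first part the point $p$ genuinely falls into the decreasing configuration with one of $p_1,p_2$ rather than landing on a shared coordinate (which would make the pair horizontal or vertical, not decreasing): but this is automatic, because if $p$ shared, say, its $y$-coordinate with $p_1$, then $y = y_1 \le y_2$, and combined with the region analysis above one finds $p$ would lie in the rectangle or a quarter-plane, contradicting $p \notin \langle p_1,p_2\rangle$; a symmetric check handles the other shared-coordinate cases. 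Once these edge cases are dispatched, both assertions follow from the region descriptions with no further work.
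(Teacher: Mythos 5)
The paper offers no proof of this proposition at all (it is stated as one of ``two easy observations'' right after the explicit description of $L_1$-lines), and your route---reading both claims off that description---is exactly the intended one. Your argument for the second claim is correct and complete: since increasing pairs are defined by strict inequalities, every $p_i$ with $i\ne j$ lies in one of the two open quadrants relative to $p_j$ and misses all three pieces of $\langle p,p_j\rangle$, with no boundary issues to worry about.

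The first claim, however, is flawed in precisely the edge cases you flagged. A harmless slip first: in the middle case $x_1<x<x_2$ your labels are swapped ($y>y_2$ makes $p,p_2$ decreasing and $y<y_1$ makes $p,p_1$ decreasing); since the conclusion is a disjunction this does not matter. The real problem is your dispatch of the shared-coordinate cases: it is \emph{not} true that a point sharing a coordinate with $p_1$ or $p_2$ must lie on $\langle p_1,p_2\rangle$. For instance $p=(x_1,y)$ with $y>y_2$ shares its $x$-coordinate with $p_1$, lies in none of the rectangle and the two quarter-planes, and hence is off the line; similarly $p=(x,y_1)$ with $x>x_2$ is off the line. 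For the first of these your case statement ``if $x\le x_1$ then $p,p_1$ is decreasing'' also fails, since the pair $p,p_1$ is vertical. The correct repair is short: when $x=x_1$, being off the line forces $y>y_2$, so $p,p_2$ is decreasing; symmetrically when $x=x_2$ one gets $y<y_1$ and $p,p_1$ is decreasing, and the shared-$y$ cases are already covered by your outer cases. This repair is not cosmetic for the paper's purposes, because the proposition is later invoked (via Lemma~\ref{lem:monseqline} in the proof of Lemma~\ref{lem:notwo1}) with $p$ ranging over a possibly degenerate set $X$, where $p$ can indeed share a coordinate with a point of the increasing pair. With that one line added, your proof is complete.
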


\begin{lemma}
\label{lem:monseqline} If $X$ is a non-collinear set that contains an increasing sequence of size $k$, then $X$ defines at least $k$ different lines.
\end{lemma}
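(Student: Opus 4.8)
The plan is to take an increasing sequence $p_1,\dots,p_k$ in $X$ ordered so that the $x$-coordinates (and hence, since the pairs are increasing, the $y$-coordinates) are strictly increasing, and to extract $k$ distinct lines from it. The first idea is to look at the $k-1$ ``consecutive'' lines $\langle p_1,p_2\rangle, \langle p_2,p_3\rangle,\dots,\langle p_{k-1},p_k\rangle$. From the description of lines determined by increasing pairs given just before the lemma, the line $\langle p_i,p_{i+1}\rangle$ is the closed axis-parallel rectangle with corners $p_i,p_{i+1}$ together with the lower-left and upper-right quarter-planes anchored at $p_i$ and $p_{i+1}$. I would first check that these $k-1$ lines are pairwise distinct: if $i<j$, then $p_{i+1}$ lies on $\langle p_i,p_{i+1}\rangle$ but, since $p_{i+1}$ is strictly inside the ``forbidden'' open region for the pair $p_j,p_{j+1}$ (its $x$-coordinate is $<x_j$ and its $y$-coordinate is $<y_j$ only when $i+1\le j$; one checks the boundary cases using strict monotonicity of both coordinates), $p_{i+1}\notin\langle p_j,p_{j+1}\rangle$. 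So we already have $k-1$ distinct lines.

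To get the $k$-th line, I would use non-collinearity. Since $X$ is not contained in a single line, and in particular not in $\langle p_1,p_k\rangle$, there is a point $q\in X$ with $q\notin\langle p_1,p_k\rangle$. By \pref{mon} applied to the increasing pair $p_1,p_k$, either $q,p_1$ or $q,p_k$ is a decreasing pair; say $q,p_j$ is decreasing for some endpoint $p_j\in\{p_1,p_k\}$ (in fact one can run the argument with any $p_j$ for which $q,p_j$ is decreasing). Now consider the line $\langle q,p_j\rangle$. By the second part of \pref{mon}, $p_i\notin\langle q,p_j\rangle$ for every $i\ne j$. Hence $\langle q,p_j\rangle$ contains at most one of the points $p_1,\dots,p_k$, whereas each consecutive line $\langle p_i,p_{i+1}\rangle$ contains at least two of them (namely $p_i$ and $p_{i+1}$); therefore $\langle q,p_j\rangle$ is different from all $k-1$ consecutive lines, giving a $k$-th distinct line.

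The main obstacle — really the only place that needs care — is the verification that the $k-1$ consecutive lines are genuinely pairwise distinct, since $L_1$-lines can be nested or overlap in unexpected ways. This comes down to checking, from the explicit geometric description of a line through an increasing pair, that $p_{i+1}$ (or symmetrically $p_i$) fails the betweenness conditions for a later pair; the strictness in ``every pair is increasing'' (so all $x$-coordinates are distinct and all $y$-coordinates are distinct along the sequence) is exactly what rules out the degenerate boundary situations. An alternative, if one wants to avoid the case analysis, is to use the first part of \pref{mon}: each consecutive line $\langle p_i,p_{i+1}\rangle$ still misses $q$ by the same argument that produced $q$, but distinguishing the consecutive lines from each other still requires the point-membership check above, so there is no way around that small computation. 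Once it is done, the count $k-1$ consecutive lines plus the one extra line $\langle q,p_j\rangle$ yields the claimed $k$ lines.
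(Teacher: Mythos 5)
Your argument breaks at its central step: the claim that the $k-1$ consecutive lines $\langle p_i,p_{i+1}\rangle$ are pairwise distinct. You assert that for $i<j$ the point $p_{i+1}$ lies in a ``forbidden'' region for the pair $p_j,p_{j+1}$ because $x_{i+1}<x_j$ and $y_{i+1}<y_j$; but you have the geometry backwards. By the description given in Section~\ref{sec:monot}, the line through an increasing pair contains the closed rectangle \emph{together with} the two quarter-planes $\{(x,y):x\le x_j,\ y\le y_j\}$ and $\{(x,y):x\ge x_{j+1},\ y\ge y_{j+1}\}$; the excluded regions are the opposite quadrants, i.e.\ the points forming a decreasing pair with both defining points. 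Hence $p_{i+1}$, lying in the lower-left quarter-plane anchored at $p_j$, \emph{does} belong to $\langle p_j,p_{j+1}\rangle$. In fact, as the paper's proof notes, every line $\langle p_i,p_j\rangle$ with $i\ne j$ contains \emph{all} points of the increasing sequence, and these lines can genuinely coincide as subsets of $X$: for $X=\{(0,0),(1,1),(2,2),(3,0)\}$, which is non-collinear and contains an increasing sequence of size $3$, one has $\langle p_1,p_2\rangle\cap X=\langle p_2,p_3\rangle\cap X=\langle p_1,p_3\rangle\cap X=\{p_1,p_2,p_3\}$, so the consecutive lines contribute only one line, not $k-1$. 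Thus the main count in your plan fails, and no boundary-case bookkeeping can repair it.

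Your final step (producing one line $\langle q,p_j\rangle$ with $q\notin\langle p_1,p_k\rangle$, meeting the sequence in at most one point) is sound and is essentially the paper's device---but the paper uses it in the opposite proportion: it iterates this construction to obtain $k-1$ \emph{decreasing} lines, each containing exactly one point of the sequence and these points pairwise different (applying Proposition~\ref{prop:mon} successively to $p_1,p_2$, then to the surviving point and $p_3$, and so on), and then adds the single line through an increasing pair, which contains the whole sequence, as the $k$-th line. Reorienting your argument along those lines is what is needed.
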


\begin{proof}
Let $p_1,...,p_k$ be the points of the increasing sequence and let us consider the pair $p_1,p_2$. As there is a point $p \notin \langle p_1,p_2\rangle$, we obtain by the first part of \pref{mon} that $p,p_i$ form a decreasing pair with $i=1$ or $i=2$ and therefore, by the second part of \pref{mon}, $\langle p,p_i\rangle$ contains only $p_i$ from the increasing sequence. Now we repeat the procedure for $p_{3-i}, p_3$ to obtain another line $\ell_2$ that contains either only $p_{3-i}$ or $p_3$ among the points of the increasing sequence. Continuing in a similar way, we finally obtain $k-1$ lines that contain exactly one point from the increasing sequence and these points are pairwise different, thus the lines are different. Furthermore for any $i,j$ with 
$1\le i$ and $j\le k$, the line $\langle p_i,p_j\rangle$ contains all points of the increasing sequence, therefore there is a $k$-th different line defined by $\cP$.
\end{proof}

A version of Lemma~\ref{lem:monseqline} for decreasing sequences can be proved in an analogous way. 

The well-known Erd\H{o}s--Szekeres theorem~\cite{{ESZ}} implies that whenever we have a set of $n$ points in the plane such that no two share their $x$- or $y$-coordinates, $I$ is the number of points in the longest increasing sequence and $D$ is the number of points in the longest decreasing sequence, then $D\cdot I\geq n$. Together with  Lemma~\ref{lem:monseqline} this implies the existence of at least $\sqrt{n}$ distinct lines in such sets. 

Let us take this idea a step further and prove that $\Omega(n^{2/3})$ lines exist in such sets. Draw a vertical line so that exactly half of the points are to the left of the line, and a horizontal line so that exactly half of the points are above it. We obtain four quadrants and either the top left and the bottom right quadrants have at least $n/4$ points each, or the same holds for the top right and the bottom left. Let us suppose the first option holds. If either the top left or the bottom right quadrant contains a decreasing sequence with $\Omega(n^{2/3})$ points, we are done by Lemma~\ref{lem:monseqline}. If this is not the case, by the Erd\H{o}s--Szekeres theorem we obtain an increasing sequence with $\Omega(n^{1/3})$ points in each of the two quadrants. We have $\Omega(n^{2/3})$ pairs of points such that one point is in one of the two sequences and the other point is in the second one. The lines determined by these pairs are all distinct. 

Much stronger results can be obtained by more complicated ideas along these lines, but in order to reach our ultimate goal and prove the existence of a linear number of lines, we have to make better use of the structure of the set $X$. We will do this in the subsequent sections.





\section{Definition of a partial order; many lines are found}\label{sec:many}

In this section, $X$ will be a non-collinear set of points in the plane, and $Y$ will be an $n$-element subset of $X$ in which no two points share their $x$-coordinates and no two points share their $y$-coordinates.

Let us define a partial order $\preceq$ on $Y$ by putting $p \preceq q$ whenever $p,q$ is an increasing pair and $p$ has smaller $x$-coordinate than $q$. Define a partition $Y=\cA_0\cup \dots \cup \cA_k$ recursively as follows. Let $\cA_0$ be the set of minimal elements of the partially ordered set $(Y,\preceq)$, and if $\cA_0,\dots,\cA_{i-1}$ are already defined, let $\cA_i$ be the set of minimal elements of the partially ordered set obtained from $(Y,\preceq)$ by removing all points that belong to $\cA_0,\dots,\cA_{i-1}$. We will call each such $\cA_i$ a {\em layer}.  
The aim of this section is to find for each $i>0$ a set of lines of cardinality $|\cA_i|$.  

We will say that two points are {\em neighbors} if they are adjacent in the cover graph of this partial order.

Let $G_i$ be the (bipartite) cover graph induced by $\cA_{i-1}$ and $\cA_i$ (that is, $V(G_i)=\cA_{i-1}\cup \cA_i$, and edges are all increasing pairs of points). Fix a positive integer $i$, and let $q_1,\dots,q_t$ be all elements of $\cA_i$ such that all their neighbors in $G_i$ have degree~1 in $G_i$. For each such $q_s$, select one of its neighbors (in $G_i$) arbitrarily, and call it $\phi(q_s)$. Define a set of lines $\cL_i$. This set contains
\begin{itemize}
\item  all lines $\langle \phi(q_{s_1}),\phi(q_{s_2})\rangle$ (these are {\em decreasing lines}) and 
 \item all lines $\langle p,q\rangle$ such that $p\in \cA_{i-1}$, $q\in \cA_i$, the pair $p,q$ is increasing, and\\ $\deg_{G_i}(p)>1$ (these are {\em increasing lines}).
\end{itemize}

The lines in $\cL_i$ have several useful properties. In order to accommodate lines that will be defined later, we formulate these properties in a slightly more general way here. 

\begin{claim}
Each decreasing line $\ell=\langle \phi(q_{s_1}),\phi(q_{s_2})\rangle \in \cL_i$ has the following properties:
\begin{itemize}
\item
$\mathfrak{D}_1$: it contains all points of $\cA_{i-1}$, 
\item
$\mathfrak{D}_2$: it contains all points of $\cA_i$ except for the neighbors (in $G_i$) of the defining points of $\ell$. 
(this is because $\phi(q_{s_i})$ forms an increasing pair with $q \in \cA_i$ if and only if $q=q_{s_i}$),
\item
$\mathfrak{D}_3$: whenever $p$ is a defining point of $\ell$ that belongs to $\cA_j$ for some index $j$, there exists an increasing sequence $p_0,...,p_{j}=p$  such that for all indices $s$ we have $p_s\in \cA_s$, and for $s\leq j-1$ we have $p_s \notin \ell$.
\end{itemize}
\end{claim}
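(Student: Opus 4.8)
The plan is to translate each of $\mathfrak{D}_1,\mathfrak{D}_2,\mathfrak{D}_3$ into the explicit geometry of a decreasing line and then read it off from the layer structure. First I would fix the picture: number the two defining points of $\ell$ as $a=(a_1,a_2)$ and $b=(b_1,b_2)$ with $a_1<b_1$; since $a,b$ is a decreasing pair this forces $a_2>b_2$, and by the description of decreasing lines in Section~\ref{sec:monot}, $\ell$ is the union of the closed rectangle $R=[a_1,b_1]\times[b_2,a_2]$ with the quarter-planes $Q_a=\{(x,y):x\le a_1,\ y\ge a_2\}$ and $Q_b=\{(x,y):x\ge b_1,\ y\le b_2\}$. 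Next I would record four routine observations for later use: (a) since no two points of $Y$ share a coordinate, every pair of points of $Y$ is either increasing or decreasing, and if $p\preceq p'$ then $p$ has both coordinates strictly smaller than those of $p'$; (b) two distinct points of a common layer $\cA_m$ are $\preceq$-incomparable, hence form a decreasing pair; (c) $a=\phi(q_{s_1})$ and $b=\phi(q_{s_2})$ are distinct points of $\cA_{i-1}$, each of degree~$1$ in $G_i$, whose unique $G_i$-neighbours are the distinct points $q_{s_1},q_{s_2}\in\cA_i$; and (d) any $p\in\cA_m$ with $m\ge 1$ has a $\preceq$-predecessor in $\cA_{m-1}$, since otherwise $p$ would already be minimal at the stage when $\cA_{m-1}$ is formed.

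For $\mathfrak{D}_1$ and the ``inclusion'' half of $\mathfrak{D}_2$ I would use one case analysis. Let $c=(c_1,c_2)$ be any point of $\cA_{i-1}\cup\cA_i$ other than $a,b,q_{s_1},q_{s_2}$; I claim $c\in\ell$. The pair $c,a$ is not increasing: if $c\in\cA_{i-1}$ this is (b), and if $c\in\cA_i$ an increasing pair $c,a$ would be an edge of $G_i$ incident to the degree-$1$ vertex $a$, forcing $c=q_{s_1}$. Hence, by (a), $c,a$ is decreasing, and likewise $c,b$ is decreasing. Now $c_1\notin\{a_1,b_1\}$, so exactly one of $c_1<a_1$, $a_1<c_1<b_1$, $c_1>b_1$ holds: in the first case $c,a$ decreasing gives $c_2>a_2$, so $c\in Q_a$; in the third case $c,b$ decreasing gives $c_2<b_2$, so $c\in Q_b$; in the middle case $c,a$ decreasing gives $c_2<a_2$ while $c,b$ decreasing gives $c_2>b_2$, so $c\in R$. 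Thus $c\in\ell$ in all cases. Letting $c$ range over $\cA_{i-1}$ (and recalling $a,b\in\ell$) yields $\mathfrak{D}_1$; letting it range over $\cA_i\setminus\{q_{s_1},q_{s_2}\}$ yields $\cA_i\setminus\{q_{s_1},q_{s_2}\}\subseteq\ell$. It remains, for $\mathfrak{D}_2$, to see $q_{s_1},q_{s_2}\notin\ell$: the pair $a,q_{s_1}$ is increasing with $a$ in the earlier layer, so $a\preceq q_{s_1}$ and $q_{s_1}$ sits strictly above and to the right of $a$; its $y$-coordinate then exceeds $a_2\ge b_2$ (excluding $R$ and $Q_b$) and its $x$-coordinate exceeds $a_1$ (excluding $Q_a$), so $q_{s_1}\notin\ell$, and symmetrically $q_{s_2}\notin\ell$.

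For $\mathfrak{D}_3$, a defining point $p$ of $\ell$ lies in $\cA_{i-1}$, so the index $j$ there is $i-1$; if $i-1=0$ the required sequence is just $p$, so assume $i\ge 2$. Iterating (d) produces a chain $p_0\prec p_1\prec\dots\prec p_{i-1}=p$ with $p_s\in\cA_s$ for all $s$, and transitivity of $\preceq$ makes it an increasing sequence. For $s\le i-2$ we have $p_s\prec p$, so by (a) $p_s$ lies strictly to the lower left of $p$; since $p$ is $a$ or $b$, a direct check against $Q_a,R,Q_b$ shows this region misses $\ell$ (if $p=a$: a $y$-coordinate below $a_2$ kills $Q_a$, an $x$-coordinate below $a_1$ kills $R$ and, being below $b_1$, kills $Q_b$; symmetrically if $p=b$), so $p_s\notin\ell$, as $\mathfrak{D}_3$ requires. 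None of the individual steps is hard; the only thing needing care is keeping the poset/layer bookkeeping aligned with the geometry — in particular the two small geometric facts that a $\preceq$-predecessor of a point lies strictly to its lower left, and that the open lower-left quadrant at an endpoint of a decreasing segment is disjoint from the line it determines.
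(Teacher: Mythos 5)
Your proof is correct and follows essentially the argument the paper intends (the paper states this claim with only parenthetical hints): same-layer pairs and non-neighbour cross-layer pairs are decreasing and hence lie on the decreasing line by the explicit $L_1$-geometry, the unique neighbours $q_{s_1},q_{s_2}$ lie strictly up-right of the defining points and so are excluded, and $\mathfrak{D}_3$ follows from the standard predecessor-chain property of the layer decomposition. The details you fill in (the case analysis on the $x$-coordinate and the lower-left-quadrant observation) are accurate and consistent with the paper's setup.
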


\begin{claim}
Each increasing line $\ell=\langle p,q\rangle \in \cL_i$ has the following properties:
\begin{itemize}
\item
$\mathfrak{I}_1$: $\ell \cap \cA_{i-1}=\{p\}$ and $\ell \cap \cA_i= \{q\}$,
\item
$\mathfrak{I}_2$: there exists a point $q' \in \cA_i$ with $p\preceq q'$ and $q' \notin \ell$ (this is because $p$ by definition has at least two neighbors in $G_i$),
\item
$\mathfrak{I}_3$: for any $j\le i-1$, there exists a point $p_j \in \cA_j$ with $p_j \in \ell$,
\item
$\mathfrak{I}_4$: for any $j\ge i$, if $p_j \in \cA_j \cap \ell$, then for all $p_{j+1} \in \cA_{j+1}$ with $p_j \preceq p_{j+1}$ we have $p_{j+1} \in \ell$.
\end{itemize}
\end{claim}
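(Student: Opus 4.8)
The plan is to reduce all four properties $\mathfrak{I}_1$--$\mathfrak{I}_4$ to a single order-theoretic description of the increasing lines and then read each one off the structure of the layered poset $(Y,\preceq)$. First I would establish the following reformulation of the description of increasing lines from \sref{monot}: if $p\prec q$ in $Y$, then for every $r\in Y\setminus\{p,q\}$ one has $r\in\langle p,q\rangle$ if and only if $r$ is $\preceq$-comparable both to $p$ and to $q$. This is immediate from the ``rectangle plus two quarter-planes'' picture --- a point of the closed rectangle is $\succeq p$ and $\preceq q$, a point of the lower-left quarter-plane is $\preceq p$, a point of the upper-right quarter-plane is $\succeq q$, and conversely --- together with the observation that the ``inconsistent'' case $r\preceq p$ and $q\preceq r$ cannot occur unless $p=q$.

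Alongside this I would record three routine facts about the layer decomposition. (i) Each $\cA_j$ is an antichain, being a set of minimal elements. (ii) Every $p\in\cA_m$ with $m\ge 1$ lies $\prec$-above some element of $\cA_{m-1}$, since $p$ is not minimal in $Y\setminus(\cA_0\cup\dots\cup\cA_{m-2})$ and any minimal element of that subposet below $p$ belongs by definition to $\cA_{m-1}$; iterating, every $p\in\cA_{i-1}$ is the top of a chain $p_0\prec p_1\prec\dots\prec p_{i-1}=p$ with $p_j\in\cA_j$. (iii) For an increasing line $\ell=\langle p,q\rangle\in\cL_i$ we have $p\in\cA_{i-1}$, $q\in\cA_i$ and $\{p,q\}$ increasing, so the layer ordering forces $p\prec q$; since consecutive layers leave no room for an intermediate element, $p$ and $q$ are neighbors, $pq\in E(G_i)$, and by the definition of an increasing line $\deg_{G_i}(p)>1$.

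With these in hand the four properties follow quickly, each in a line or two. For $\mathfrak{I}_1$: clearly $p\in\ell\cap\cA_{i-1}$ and $q\in\ell\cap\cA_i$; if some $r\in\cA_{i-1}\setminus\{p\}$ lay on $\ell$ then by the reformulation $r$ would be comparable to $p$, contradicting (i), and symmetrically no $r\in\cA_i\setminus\{q\}$ lies on $\ell$. For $\mathfrak{I}_2$: by (iii) pick a $G_i$-neighbor $q'\ne q$ of $p$; then $p\prec q'$, but $q',q$ lie in the antichain $\cA_i$, so $q'$ is not comparable to $q$ and hence $q'\notin\ell$. For $\mathfrak{I}_3$: in the chain $p_0\prec\dots\prec p_{i-1}=p\prec q$ from (ii), each $p_j$ with $j\le i-1$ satisfies $p_j\preceq p\prec q$, so it is comparable to both $p$ and $q$ and therefore lies on $\ell$, while $p_j\in\cA_j$. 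For $\mathfrak{I}_4$: suppose $p_j\in\cA_j\cap\ell$ with $j\ge i$; by the reformulation $p_j$ is comparable to $q\in\cA_i$, and since $j\ge i$ the layer ordering excludes $p_j\prec q$, so $q\preceq p_j$; if now $p_{j+1}\in\cA_{j+1}$ with $p_j\preceq p_{j+1}$, then $q\preceq p_j\prec p_{j+1}$ and $p\prec q\prec p_{j+1}$, so $p_{j+1}$ is comparable to both defining points and hence on $\ell$.

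I do not expect a genuine obstacle: once the comparability reformulation of increasing lines is set up, everything is bookkeeping with the partial order. The step that needs the most attention is $\mathfrak{I}_4$, which runs ``upward'' through the layers; the point to get right is that any point of $\ell$ in layer $i$ or above must actually be $\succeq q$ (not merely comparable to $q$), and it is this that pushes all of its $\preceq$-successors in the next layer back onto $\ell$. The only other thing to watch is that the reformulation is applied only to points genuinely distinct from $p$ and $q$, which is automatic here since the points involved live in layers different from those of $p$ and $q$, or are chosen distinct from $q$ outright.
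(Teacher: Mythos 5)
Your proposal is correct, and it takes the route the paper intends: the paper states this claim without a written proof (only the parenthetical remark for $\mathfrak{I}_2$), and your comparability characterization of $\langle p,q\rangle\cap Y$ (a point of $Y$ lies on an increasing line iff it is $\preceq$-comparable to both defining points), combined with the antichain and chain properties of the layers $\cA_j$, is exactly the bookkeeping needed to justify $\mathfrak{I}_1$--$\mathfrak{I}_4$. No gaps; the one delicate point, that any $p_j\in\cA_j\cap\ell$ with $j\ge i$ must satisfy $q\preceq p_j$, is handled correctly.
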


Let $\cL$ be the union of all $\cL_i$ for $i>0$. For the purposes of the following claim, we consider $\cL$ to be a multiset. We will see later (in Lemma~\ref{lem:distinct1}) that the lines in $\cL$ are in fact distinct, i.e., it is a set.

\begin{claim}
 We have $|\cL|\geq \sum_{i>0} (|\cA_i|-1).$ 
\end{claim}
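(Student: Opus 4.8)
The claim is about a multiset $\cL = \bigcup_{i>0}\cL_i$, so I should show that for each fixed $i>0$ we have $|\cL_i| \geq |\cA_i| - 1$, and then sum. Wait, but actually the multiset union means lines in different $\cL_i$ are counted separately, so it suffices to bound each $|\cL_i|$. Let me think about how $\cL_i$ is built.

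$\cL_i$ has two parts: decreasing lines $\langle\phi(q_{s_1}),\phi(q_{s_2})\rangle$ where $q_1,\dots,q_t$ are the elements of $\cA_i$ whose all $G_i$-neighbors have degree 1; and increasing lines $\langle p,q\rangle$ for increasing pairs $p\in\cA_{i-1}, q\in\cA_i$ with $\deg_{G_i}(p)>1$.

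So I need: (number of decreasing lines) + (number of increasing lines) $\geq |\cA_i|-1$.

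Let me partition $\cA_i$. Call $q\in\cA_i$ "low-degree-attached" if all its neighbors in $G_i$ have degree 1 — these are $q_1,\dots,q_t$. Call it "high-degree-attached" otherwise, i.e., it has a neighbor $p$ with $\deg_{G_i}(p)>1$. Note every $q\in\cA_i$ has at least one neighbor in $G_i$: since $q\notin\cA_{i-1}$ (so $q$ is not minimal after removing $\cA_0,\dots,\cA_{i-2}$), it must dominate something in $\cA_{i-1}$... actually I need to be careful. $\cA_i$ is the set of minimal elements after removing $\cA_0\cup\dots\cup\cA_{i-1}$. So each $q\in\cA_i$ has, below it, some element already removed; and more precisely there should be a neighbor in $\cA_{i-1}$. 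Let me just say: every $q\in\cA_i$ has a neighbor in $\cA_{i-1}$ in $G_i$ (standard fact about layer decomposition — every element of layer $i$ covers an element of layer $i-1$).

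Now:

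The plan is to prove the stronger statement that $|\cL_i|\ge |\cA_i|-1$ for every $i>0$; since $\cL$ is by definition the multiset union of the sets $\cL_i$, summing these inequalities over $i>0$ yields the claim. Fix $i>0$ and recall the standard property of the layer decomposition: every $q\in\cA_i$ lies above some element of $\cA_{i-1}$ (when $\cA_{i-1}$ was formed, $q$ was not yet minimal, so something below $q$ was still present, and everything still present and below $q$ at that stage must lie in $\cA_{i-1}$, because no later layer lies below the minimal element $q$). In particular every $q\in\cA_i$ has at least one neighbour in $G_i$, which is also what makes the construction of $\cL_i$ well defined. Partition $\cA_i$ into the set $Q=\{q_1,\dots,q_t\}$ of points all of whose $G_i$-neighbours have degree~$1$, and the complement $R=\cA_i\setminus Q$; since $G_i$ is bipartite between $\cA_{i-1}$ and $\cA_i$, every point of $R$ has a neighbour $p\in\cA_{i-1}$ with $\deg_{G_i}(p)>1$.

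For each $q\in R$, choosing such a neighbour $p$ gives an increasing line $\langle p,q\rangle\in\cL_i$, and by property $\mathfrak{I}_1$ this line meets $\cA_i$ in exactly $\{q\}$; hence distinct points of $R$ give distinct increasing lines, so $\cL_i$ contains at least $|R|$ increasing lines. For the points of $Q$, observe that $\phi$ is injective on $Q$: if $\phi(q_s)=\phi(q_{s'})$, this common point has degree~$1$ in $G_i$, so its unique neighbour equals both $q_s$ and $q_{s'}$. Thus $\phi(q_1),\dots,\phi(q_t)$ are $t$ distinct points of $\cA_{i-1}$. If $t\ge 2$, consider the $t-1$ decreasing lines $\langle\phi(q_1),\phi(q_s)\rangle$, $s=2,\dots,t$; by property $\mathfrak{D}_2$ the defining points $\phi(q_1),\phi(q_s)$ have unique neighbours $q_1,q_s$, so such a line meets $\cA_i$ in exactly $\cA_i\setminus\{q_1,q_s\}$, and these traces are pairwise distinct, giving $t-1$ distinct decreasing lines. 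Moreover no increasing line of $\cL_i$ can equal a decreasing line of $\cL_i$: by $\mathfrak{I}_1$ an increasing line meets $\cA_{i-1}$ in a single point, while by $\mathfrak{D}_1$ a decreasing line contains all of $\cA_{i-1}$, and decreasing lines occur only when $t\ge 2$, in which case $|\cA_{i-1}|\ge 2$. Combining, for $t\ge 2$ we get $|\cL_i|\ge |R|+(t-1)=(|\cA_i|-t)+(t-1)=|\cA_i|-1$, and for $t\le 1$ we get $|\cL_i|\ge |R|=|\cA_i|-t\ge |\cA_i|-1$ directly.

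I expect the only delicate points to be bookkeeping: confirming that the partition of $\cA_i$ and the map $\phi$ are well defined (i.e. that every element of $\cA_i$ genuinely has a $G_i$-neighbour), and carefully handling the degenerate ranges $t\le 1$ and $|\cA_{i-1}|=1$ so that increasing and decreasing lines are never conflated. Every distinctness assertion reduces to comparing the traces of the lines on $\cA_{i-1}$ (via $\mathfrak{D}_1$, $\mathfrak{I}_1$) or on $\cA_i$ (via $\mathfrak{D}_2$, $\mathfrak{I}_1$), all of which are supplied verbatim by the two preceding claims, so there is essentially no computation to carry out.
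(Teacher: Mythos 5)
Your proof is correct and follows essentially the same route as the paper: partition $\cA_i$ into the points all of whose $G_i$-neighbors have degree~1 and the rest, count one increasing line per point of the second class and (at least) $c_i-1$ decreasing lines from the first class, and sum over $i$; the paper simply counts all $\binom{c_i}{2}$ decreasing pairs and uses $\binom{c_i}{2}\ge c_i-1$, while you exhibit $t-1$ of them explicitly and verify distinctness via the traces on $\cA_{i-1}$ and $\cA_i$. Your extra care (injectivity of $\phi$, every point of $\cA_i$ having a neighbor in $\cA_{i-1}$) is sound and only makes explicit what the paper leaves implicit.
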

\begin{proof}
For a fixed $i$, suppose that $\cA_i$ contains $c_i$ points for which all neighbors in $G_i$ have degree equal to~1 and $d_i$ points that have neighbors of higher degree. Then $\cL_i$ contains at least $d_i$ increasing lines and $\binom{c_i}{2}$ decreasing lines. We have $\binom{c_i}{2}\geq c_i-1$ for all non-negative integers $c_i$, so we obtain $|\cL_i|\geq |\cA_i|-1$.
\end{proof}

Some of the sets $\cL_i$ might contain only $|\cA_i|-1$ lines. This can happen only if $c_i=1$ or $c_i=2$. In both cases we need to find a line $\ell_i^+$ different from all lines in $\cL$. We will put $\cL'_i=\cL_i \cup \{\ell_i^+\}$ and $\cL'=\cup_{i>0}\cL'_i$.

 If $c_i=1$, $\cL_i$ does not contain any decreasing line. If $|\cA_{i-1}|\ge 2$, pick any two points $p,q \in \cA_{i-1}$ and let $\ell^+_i=\langle p,q \rangle$. If $|\cA_{i-1}|=1$, then we also have $|\cA_i|=1$, since otherwise the only point in $\cA_{i-1}$ would have degree at least $2$, implying $c_i=0$. Let $q$ denote the only point in $\cA_i$ and let $r$ be a point outside $\langle p,q\rangle$. 
We have $\cA_j\subseteq \langle p,q\rangle$ for all $j\geq i-1$. It follows that if $r\in Y$, then $r \in\cup_{j< i-1}\cA_j$ and the pair $r,p$ is decreasing.
 If the line $\langle p,q\rangle$ contains all points of $Y$, then $r\in X\setminus Y$, and the pair $r,p$ can be increasing or decreasing. Let $\ell^+_i=\langle r,p\rangle$. We will call such line a {\em special} line. 

\begin{claim}
Decreasing lines defined in this step satisfy properties $\mathfrak{D}_1$, $\mathfrak{D}_2$, and $\mathfrak{D}_3$. A special increasing line satisfies $\mathfrak{I}_3$. The pair $r,q$ is decreasing since $r \notin \langle p,q \rangle$, so a special increasing line $\langle r,p\rangle$ also satisfies 
\begin{itemize}
\item
$\mathfrak{I}_2'$: no point of $\cA_i$ belongs to $\langle r,p\rangle$.
\end{itemize} 
\end{claim}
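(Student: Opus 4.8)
The plan is to prove everything by direct inspection, using the explicit ``rectangle plus two quarter-planes'' description of $L_1$-lines recalled in \sref{monot} together with two facts about the layer partition: first, $a\prec b$ forces $a\in\cA_j$ and $b\in\cA_{j'}$ with $j<j'$; and second, as a consequence, every $p\in\cA_j$ is the top of an increasing sequence $p_0\prec p_1\prec\dots\prec p_j=p$ with $p_s\in\cA_s$ for all $s$ (build it downwards: $p\notin\cA_{j-1}$ produces some $p_{j-1}\in\cA_{j-1}$ with $p_{j-1}\prec p$, then recurse). Before the case analysis I would isolate the one piece of geometry used repeatedly: if $\langle a,b\rangle$ is a decreasing line and $w$ lies strictly in the lower-left open quadrant of $a$ (in particular if $w\prec a$), then $w\notin\langle a,b\rangle$; dually, if $w$ forms a decreasing pair with \emph{both} $a$ and $b$, then $w\in\langle a,b\rangle$. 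Both follow by splitting on which side of $a$ and of $b$ the $x$-coordinate of $w$ falls and reading off the line's description.

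Consider first a decreasing line $\ell=\langle p,q\rangle$ with $p,q\in\cA_{i-1}$. Since $\cA_{i-1}$ is a $\preceq$-antichain, any other $w\in\cA_{i-1}$ forms a decreasing pair with both $p$ and $q$, hence $w\in\ell$; this is $\mathfrak{D}_1$. For $\mathfrak{D}_2$, each $q'\in\cA_i$ relates to the defining point $p$ in exactly one of two ways, namely $p\prec q'$ (a $G_i$-edge) or ``$p,q'$ is a decreasing pair'' (the possibility $q'\prec p$ is excluded by the first layer fact), and similarly for $q$; if $q'$ is a $G_i$-neighbor of neither defining point then it forms a decreasing pair with both and so lies on $\ell$, which is exactly the content of $\mathfrak{D}_2$. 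For $\mathfrak{D}_3$ (here the relevant index is $j=i-1$) take the increasing sequence $p_0\prec\dots\prec p_{i-1}=p$ ending at the defining point $p$ produced above, and the analogous one ending at $q$; each $p_s$ with $s\le i-2$ satisfies $p_s\prec p$, so by the geometric observation $p_s\notin\ell$, as required.

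Now the special line $\ell^+_i=\langle r,p\rangle$, where $\cA_{i-1}=\{p\}$, $\cA_i=\{q\}$, $p\prec q$, and $r\notin\langle p,q\rangle$. If $r,p$ is a decreasing pair --- which is forced when $r\in Y$, because then $r\in\cA_{j_0}$ with $j_0<i-1$, and $r\prec p$ would place $r$ in the lower-left quadrant of the increasing pair $p\prec q$, i.e.\ on $\langle p,q\rangle$ --- then $\ell^+_i$ is a decreasing line, and $\mathfrak{D}_1,\mathfrak{D}_2$ are immediate ($\cA_{i-1}=\{p\}\subseteq\ell^+_i$, and the unique point $q$ of $\cA_i$ is a $G_i$-neighbor of the defining point $p$), while $\mathfrak{D}_3$ is handled as in the previous paragraph for the defining point $p$, and for the defining point $r$ either likewise (if $r\in Y$, using the increasing sequence ending at $r$) or vacuously (if $r\notin Y$, since then $r$ lies in no layer). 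If instead $r,p$ is increasing, then $r$ must lie in the \emph{upper-right} open quadrant of $p$ (otherwise it would again land on $\langle p,q\rangle$), so the quarter-plane $\{x\le x_p,\ y\le y_p\}$ is part of both $\langle p,q\rangle$ and $\langle r,p\rangle$; the sequence $p_0\prec\dots\prec p_{i-1}=p$ then has each $p_s$ with $s\le i-2$ in that quarter-plane, hence on $\langle r,p\rangle$, and $p_{i-1}=p\in\cA_{i-1}\cap\langle r,p\rangle$, which gives $\mathfrak{I}_3$. Finally, since $r$ sits in the upper-right quadrant of $p$ but off $\langle p,q\rangle$, a coordinate comparison of $r$ and $q$ forces $(x_r-x_q)(y_r-y_q)<0$, so $r,q$ is a decreasing pair; feeding this into the description of the increasing line $\langle r,p\rangle$ shows $q$ lies neither in its bounding rectangle nor in either quarter-plane, which is $\mathfrak{I}_2'$.

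I do not expect a genuine obstacle: the content is entirely the interplay between $\preceq$-relations and the four regions cut out by an $L_1$-line, and every incidence above reduces to reading off that description. The one point demanding care is the bookkeeping for the special line, where one must keep straight the three branches ($r\in Y$; $r\notin Y$ with $r,p$ decreasing; $r\notin Y$ with $r,p$ increasing) and check in each that the single hypothesis $r\notin\langle p,q\rangle$, combined with $p\prec q$, pins down the quadrant of $r$ relative to $p$ --- which is precisely what makes all the incidences go through.
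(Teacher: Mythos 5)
Your verification is correct, and it takes the route the paper intends: the paper states this claim with no proof at all, leaving exactly the kind of direct inspection of the rectangle-plus-quarter-planes description and the layer structure that you carry out. One small point to tighten: for $\mathfrak{D}_2$ (both for $\langle p,q\rangle$ with $p,q\in\cA_{i-1}$ and for a decreasing special line) you only verify that non-neighbours of the defining points lie \emph{on} the line, whereas the later use of $\mathfrak{D}_2$ in \lref{distinct1} also needs that the $G_i$-neighbours of the defining points are genuinely \emph{absent} from it; this is the upper-right counterpart of your lower-left quadrant observation (equivalently, the second half of \pref{mon}) and should be recorded alongside it.
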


\vspace{3mm}

If $c_i=2$, there is exactly one decreasing line in $\cL_i$. Let $q_1=(x_1,y_1)$ and $q_2=(x_2,y_2)$ be the two points in $\cA_i$ that only have neighbors of degree~1 in $G_i$, and suppose $x_1<x_2$ and $y_1>y_2$. Let $p_1,p_2,...,p_k$ be the points of $\cA_{i-1}$ sorted in increasing order according to their $x$-coordinate. Note that the neighbors of $q_2$ in 
$G_i$ are consecutive points in the above ordering. Let $p_a$ be the neighbor of $q_2$ with the smallest index and define $\ell^+_i=\langle p_{a-1},q_2\rangle$. Note that $a>1$ holds, since all neighbors of $q_1$ have smaller indices than $a$. 


\begin{claim}
The line $\ell^+_i$  has the property $\mathfrak{D}_3$, as well as the following two properties:
\begin{itemize}
\item
$\mathfrak{D}_1'$: $\ell^+_i$ contains all points of $\cA_{i-1}$ except the neighbors of $q_2$, therefore $\ell^+_i$ contains at least one point from all increasing pairs from $\cA_{i-1}\cup \cA_i$, 
\item
$\mathfrak{D}_2'$: it contains all points of $\cA_i$ except for the neighbors of $p_{a-1}$.
\end{itemize}
\end{claim}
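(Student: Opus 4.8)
The plan is to reduce everything to the explicit rectangle-plus-two-quarter-planes description of a decreasing line, using two structural facts about the layers that I would record first: each layer $\cA_j$ is a decreasing sequence (no two of its points are in the increasing relation, since that would violate minimality), so for $\cA_j$ sorting by $x$-coordinate coincides with reverse-sorting by $y$-coordinate; and every point of $\cA_i$ has a predecessor in $\cA_{i-1}$, so $q_1$ and $q_2$ genuinely have neighbours in $G_i$ and every point admits a descending chain meeting every layer below it. I would also fix at the outset that $\{p_{a-1},q_2\}$ is a decreasing pair with $p_{a-1}$ on the upper left: $x_{a-1}'<x_a'<x_2$ since $p_a$ is a neighbour of $q_2$, and $y_{a-1}'>y_2$ since $p_{a-1}$ is not. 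Hence $\ell^+_i$ is $[x_{a-1}',x_2]\times[y_2,y_{a-1}']$ together with the quarter-planes $\{x\le x_{a-1}',\,y\ge y_{a-1}'\}$ and $\{x\ge x_2,\,y\le y_2\}$, and all later verifications are coordinate checks against these three pieces.

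For $\mathfrak{D}_3$ I would, for each of the two defining points, take the canonical descending chain from the second structural fact: a chain $p_0\prec\dots\prec p_{i-1}=p_{a-1}$ with $p_s\in\cA_s$, and likewise one ending at $q_2\in\cA_i$. Every element strictly below the last one is below-left of the relevant defining point by transitivity of the increasing relation, and a quick check shows a point below-left of \emph{either} defining point of a decreasing line misses all three pieces of the line. So the lower part of each chain avoids $\ell^+_i$, which is $\mathfrak{D}_3$.

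For $\mathfrak{D}_1'$ I would use that the neighbours of $q_2$ in $\cA_{i-1}$, listed by increasing $x$-coordinate, form the consecutive block $p_a,\dots,p_b$ (the intersection of the prefix with $x$-coordinate $<x_2$ and the suffix with $y$-coordinate $<y_2$). Then $p_j$ with $j<a$ lands in the upper-left quarter-plane, $p_j$ with $j>b$ lands in the lower-right quarter-plane, $p_{a-1}$ is a defining point, and each $p_j$ with $a\le j\le b$ has both coordinates strictly below those of $q_2$ and so misses the line; this is the first half. The ``therefore'' then follows: if $p\in\cA_{i-1}$ lies off $\ell^+_i$ it must be a neighbour of $q_2$, hence has degree $1$ in $G_i$ (since $c_i=2$), hence any increasing partner of $p$ in $\cA_i$ is $q_2$, which lies on the line. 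For $\mathfrak{D}_2'$ I would take $q=(x,y)\in\cA_i$ and branch on the signs of $x-x_{a-1}'$, $y-y_{a-1}'$, $x-x_2$: the neighbours of $p_{a-1}$ ($x>x_{a-1}'$ and $y>y_{a-1}'$) sit above the rectangle and outside both quarter-planes; if $y<y_{a-1}'$ then $q$ falls into the rectangle or the lower-right quarter-plane according as $x<x_2$ or $x>x_2$ (using the decreasing order of $\cA_i$ to read off the sign of $y-y_2$); and if $x<x_{a-1}'$ then $q$ falls into the upper-left quarter-plane.

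The last sub-case is the one place where the argument stops being bookkeeping. A point $q\in\cA_i$ with $x<x_{a-1}'$ could a priori have $y_2<y<y_{a-1}'$ and then escape $\ell^+_i$ altogether, which would sink the claim; the resolution is structural rather than geometric — such a $q$ would satisfy $q\prec p_{a-1}$, forcing $q$ into a layer strictly before $\cA_{i-1}$, a contradiction. Remembering to invoke the poset at exactly this point is the only genuinely non-routine step; everything else is careful casework against the three-piece picture of a decreasing line.
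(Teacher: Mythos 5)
Your proof is correct. The paper states this claim without any proof, and your verification is exactly the routine check the authors leave implicit: fix the orientation of the decreasing pair $p_{a-1},q_2$, describe $\ell^+_i$ as rectangle plus two quarter-planes, and do the coordinate casework, with the one genuinely non-geometric point being the sub-case $x<x'_{a-1}$, $y_2<y<y'_{a-1}$ in $\mathfrak{D}_2'$, which you correctly kill by observing that such a point would satisfy $q\prec p_{a-1}$ and hence lie in a layer before $\cA_{i-1}$. One cosmetic slip: a neighbour of $q_2$ has degree $1$ in $G_i$ because $q_2$ is by definition one of the points of $\cA_i$ all of whose $G_i$-neighbours have degree $1$, not because $c_i=2$; this does not affect the argument.
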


\begin{lemma}\label{lem:distinct1}
 All lines in $\cL'$ are distinct.
\end{lemma}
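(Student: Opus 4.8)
The statement asserts that all lines in the multiset $\cL'$ are in fact distinct. My plan is to prove this by carefully distinguishing the various \emph{types} of lines in $\cL'$ and showing that two lines of the same type cannot coincide, and two lines of different types cannot coincide either. The types are: decreasing lines (coming from $\cL_i$ or from the $c_i=1$, $|\cA_{i-1}|\geq 2$ case, or the $\ell_i^+$ of the $c_i=2$ case — call all of these ``decreasing''), increasing lines in $\cL_i$, and special increasing lines (from the $c_i=1$, $|\cA_{i-1}|=1$ case). The key organizing idea is that the properties $\mathfrak{D}_1,\mathfrak{D}_2,\mathfrak{D}_3$ and $\mathfrak{I}_1,\dots,\mathfrak{I}_4$ (together with their primed variants $\mathfrak{D}_1',\mathfrak{D}_2',\mathfrak{I}_2'$) encode, for each line, \emph{which layer it ``belongs to''} and how it intersects the layers around it. So the first step is to show that a line's type and its index $i$ can be recovered from the line as a set of points, using these properties.

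\textbf{Step 1: a decreasing line ``remembers'' its index.} A decreasing line $\ell \in \cL_i'$ contains all of $\cA_{i-1}$ (property $\mathfrak{D}_1$, or at least all but finitely many points of $\cA_{i-1}$ plus at least one point from every increasing pair by $\mathfrak{D}_1'$), but by $\mathfrak{D}_2$ (resp. $\mathfrak{D}_2'$) it \emph{misses} at least one point of $\cA_i$; more importantly, by $\mathfrak{D}_3$ its defining points lie in layers $\le i$ and are ``reachable'' by an increasing chain avoiding $\ell$. I would use these to argue: if $\ell \in \cL_i'$ is decreasing and $\ell' \in \cL_j'$ is decreasing with $i < j$, then $\ell$ contains all of $\cA_{j-1}$ would force (via the increasing-pair covering property) a contradiction with the fact that $\ell$ misses points of $\cA_i$ and an increasing chain leads from $\cA_i$ up to layer $j-1$. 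Hence $i = j$, and then I reduce to showing two decreasing lines within the \emph{same} $\cL_i'$ are distinct. Within a fixed $i$ there are a few cases ($\binom{c_i}{2}$ lines among the $\phi(q_s)$, or the single $\ell_i^+$): two lines $\langle \phi(q_{s_1}),\phi(q_{s_2})\rangle$ and $\langle \phi(q_{s_1'}),\phi(q_{s_2'})\rangle$ differ because by $\mathfrak{D}_2$ the set of points of $\cA_i$ they omit is exactly the neighborhoods of the defining points, and these neighborhoods are disjoint for distinct $s$'s (degree-1 neighbors), so different pairs omit different subsets of $\cA_i$. The $c_i = 1, 2$ special constructions are handled by noting which points of $\cA_{i-1}$ or $\cA_i$ they omit.

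\textbf{Step 2: increasing and special lines ``remember'' their index, and the types are disjoint.} An increasing line $\ell = \langle p, q\rangle \in \cL_i$ has $\ell \cap \cA_{i-1} = \{p\}$ and $\ell \cap \cA_i = \{q\}$ (property $\mathfrak{I}_1$): it meets layer $i-1$ in exactly one point. A decreasing line meets $\cA_{i-1}$ in all (or almost all) of its points, so for $|\cA_{i-1}| \geq 2$ an increasing line cannot be a decreasing line of the same index; the index-distinctness for increasing lines follows from $\mathfrak{I}_1, \mathfrak{I}_3, \mathfrak{I}_4$ (it contains a point in every layer below $i$ by $\mathfrak{I}_3$, contains exactly one point in layer $i-1$, and propagates upward by $\mathfrak{I}_4$), so the smallest layer in which $\ell$ has exactly one point pins down $i$ — here I need to be slightly careful and probably argue via the first layer where $|\ell \cap \cA_j| = 1$ while $|\ell \cap \cA_{j+1}|$ jumps, or directly that $\mathfrak{I}_4$ forces $\ell$ to eventually contain whole layers. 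Special increasing lines are handled via $\mathfrak{I}_2'$ (they miss all of $\cA_i$) together with $\mathfrak{I}_3$, and the edge cases where $|\cA_{i-1}| = 1$ need separate attention since then ``contains all of $\cA_{i-1}$'' and ``contains exactly one point of $\cA_{i-1}$'' say the same thing — this is exactly where $\mathfrak{I}_2'$ versus $\mathfrak{D}_2$ (does the line contain points of $\cA_i$?) breaks the tie.

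\textbf{Main obstacle.} The hard part will be the bookkeeping in the degenerate small-layer cases ($|\cA_{i-1}| = 1$, or $c_i \in \{1,2\}$, or the ``special'' line defined using a point $r \in X \setminus Y$), where the clean dichotomy ``decreasing lines fill up $\cA_{i-1}$, increasing lines hit it once'' collapses. In those cases the distinguishing invariant must instead be extracted from the behaviour on $\cA_i$ (omitting all of it, omitting a neighborhood, or containing all of it) and on the layers \emph{above} $i$ via $\mathfrak{I}_4$/$\mathfrak{D}_3$. I expect to need a short lemma saying that for each line $\ell \in \cL'$ there is a well-defined index $i(\ell)$ and a well-defined type $\in \{\text{dec}, \text{inc}, \text{special}\}$ recoverable from $\ell$ as a point set, after which distinctness reduces to the within-$(i,\text{type})$ comparisons, each of which is a direct check using the omitted-neighborhood structure. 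The Erdős–Szekeres / monotone-sequence machinery of Section~\ref{sec:monot} is not needed here; everything is driven by the combinatorics of the layering and the cover graphs $G_i$.
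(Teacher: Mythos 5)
Your overall framework is the same as the paper's (distinguish lines by type and layer index using $\mathfrak{D}_1$--$\mathfrak{D}_3$, $\mathfrak{I}_1$--$\mathfrak{I}_4$ and their primed variants), but at the two places where the real work happens the sketched arguments would not go through. First, for two decreasing lines $\ell_1\in\cL_i'$, $\ell_2\in\cL_j'$ with $i<j$, you propose to derive a contradiction from a point of $\cA_i$ missed by $\ell_1$ together with ``an increasing chain leading from $\cA_i$ up to layer $j-1$''. Such an upward chain need not exist: a point of $\cA_i$ can be maximal in $(Y,\preceq)$, and moreover a decreasing line of index $j$ contains $\cA_{j-1}$ but in general misses many points of lower layers, so knowing that $\ell_2\supseteq\cA_{j-1}$ yields no contradiction by itself. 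The paper argues in the opposite direction: it takes a defining point $u\in\cA_{j-1}$ of $\ell_2$ and uses $\mathfrak{D}_3$, whose essential content is not merely the existence of the chain $u_0\prec\dots\prec u_{j-1}=u$ but that $u_s\notin\ell_2$ for $s\le j-2$; then either $u_{i-1}\in\ell_1$ (which already distinguishes the lines) or $\ell_1$ must be the $c_i=2$ line $\ell_i^+$ satisfying only $\mathfrak{D}_1'$, forcing $u_i=q_2$, and the case $i=j-1$ then needs a further case analysis on the second defining point of $\ell_2$. This sub-case is the hardest part of the proof, and your plan only gestures at it (``handled by noting which points they omit'').

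Second, for two increasing lines of different indices your proposed invariant --- the smallest layer met in exactly one point, or the claim that $\mathfrak{I}_4$ forces the line ``to eventually contain whole layers'' --- is not justified: $\mathfrak{I}_4$ only propagates membership upward along $\preceq$ from points already on the line, so it never produces whole layers, and nothing in $\mathfrak{I}_1$/$\mathfrak{I}_3$ prevents a line of index $i$ from also meeting some earlier layer in exactly one point. The correct (and simpler) argument, which you do not state, combines $\mathfrak{I}_4$ for the lower-index line with $\mathfrak{I}_2$ (or $\mathfrak{I}_2'$) for the higher-index one: if the lines coincided, the defining point $p\in\cA_{j-1}$ of $\ell_2$ would lie on $\ell_1$, so $\mathfrak{I}_4$ would force the point $q'\in\cA_j$ guaranteed by $\mathfrak{I}_2$ onto $\ell_1$, although $q'\notin\ell_2$. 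Finally, your treatment of the increasing-versus-decreasing comparison covers only equal indices with $|\cA_{i-1}|\ge 2$; the paper disposes of all such pairs at once by noting that every line in $\cL'$ has a defining point in $Y$ and that, since no two points of $Y$ share a coordinate, an increasing line can never coincide with a decreasing one. As written, the proposal is a plausible plan in the paper's spirit, but the decisive steps are missing or rest on claims that fail.
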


\begin{proof}

Since for each line in $\cL'=\cup_{i>0}\cL'_i$, at least one of its defining points lies in $Y$, the condition that no two points of $Y$ share their $x$- or $y$-coordinates guarantees that an increasing line never coincides with a decreasing line. 

Let $\ell_1 \in \cL'_i, \ell_2\in \cL'_j$ be two decreasing lines. If $i=j$ and both lines satisfy $\mathfrak{D}_2$, then they are distinct. Otherwise one of them satisfies $\mathfrak{D}_1$ and the other $\mathfrak{D}_1'$ and thus they are distinct.

Suppose $i<j$. The line $\ell_2$ has at least one defining point $u$ on $\cA_{j-1}$, and by $\cD_3$ there exists an increasing sequence  $u_0,\dots,u_{j-1}=u$ such that $u_s\not\in \ell_2$ for $s\leq j-2$. If $u_{i-1}\in \ell_1$, we are done. If not, $\ell_1$ satisfies $\cD_1'$. The vertex $u_{i-1}$ has degree~1 in $G_i$, so $v_i$ is the vertex $q_2$ defined above. If $i<j-1$, then $q_2\in \ell_1\setminus \ell_2$ and we are done. 
Now suppose $i=j-1$. We have $q_2=u$. If the other defining point of $\ell_2$ is on $\cA_{j-1}$, repeat the same argument to get $q_2=v$, a contradiction. If the other defining point of $\ell_2$ is on $\cA_j$, similar argument applies. 
If the other defining point of $\ell_2$ is on $\cA_k$ with $k<j-1$, then $|\cA_{j-1}|=1$, a contradiction with the assumption that according to $\cD_1'$ for $\ell_1$, it contains two different points, $q_1$ and $q_2$.  


Let $\ell_1 \in \cL'_i, \ell_2\in \cL'_j$ be two increasing lines. If $i=j$, then both $\ell_1$ and $\ell_2$ satisfy $\mathfrak{I}_1$ (if $\cL'_i$ contains a special increasing line, then there is only one increasing line in $\cL'_i$), so they are distinct. If $i< j$ and $\ell_1$ satisfies $\mathfrak{I}_2'$, then the lines are distinct since $\mathfrak{I}_3$ holds for $\ell_2$. Otherwise they are distinct because $\ell_1$ satisfies $\mathfrak{I}_4$, and $\ell_2$ satisfies $\mathfrak{I}_2$ or $\mathfrak{I}_2'$.
\end{proof}

For future reference, let us state also the following observation as a lemma.

\begin{lemma}\label{lem:size3}
 For $i>0$ we have $|\cL_i'|\geq |\cA_i|$.
\end{lemma}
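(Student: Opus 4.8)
The statement $|\cL_i'|\geq |\cA_i|$ is essentially a bookkeeping consequence of the two earlier claims, so the plan is to do a case analysis on the value of $c_i$, the number of points of $\cA_i$ whose $G_i$-neighbors all have degree $1$. Recall from the claim about $|\cL|$ that $\cL_i$ contains at least $d_i$ increasing lines and $\binom{c_i}{2}$ decreasing lines, where $c_i+d_i=|\cA_i|$; hence $|\cL_i|\geq d_i+\binom{c_i}{2}$.

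First I would dispose of the generic case: if $c_i\geq 3$, then $\binom{c_i}{2}\geq c_i$, so already $|\cL_i|\geq d_i+c_i=|\cA_i|$ and there is nothing to add, i.e.\ $\cL_i'=\cL_i$ suffices (or we simply note $|\cL_i'|\geq|\cL_i|\geq|\cA_i|$). The case $c_i=0$ is also immediate: then $|\cL_i|\geq d_i=|\cA_i|$. This leaves the two genuinely deficient cases, $c_i=1$ and $c_i=2$, which are exactly the ones for which the text constructed the extra line $\ell_i^+$. When $c_i=1$ we have $|\cL_i|\geq d_i=|\cA_i|-1$, and adjoining the special or decreasing line $\ell_i^+$ (which was shown in the corresponding claim to exist and to be a genuine line of the configuration, with the stated $\mathfrak D$- or $\mathfrak I$-properties) gives $|\cL_i'|=|\cL_i|+1\geq|\cA_i|$. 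When $c_i=2$ we have $|\cL_i|\geq d_i+\binom{2}{2}=d_i+1=|\cA_i|-1$, and again adjoining $\ell_i^+=\langle p_{a-1},q_2\rangle$ yields $|\cL_i'|\geq|\cA_i|$.

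The one point requiring a sentence of care is that $\ell_i^+$ is genuinely an \emph{additional} line and not already counted inside $\cL_i$: here I would invoke the defining-point descriptions. In the $c_i=1$ case with $|\cA_{i-1}|\geq 2$, $\ell_i^+$ is decreasing while $\cL_i$ contained no decreasing line, so it is new; in the subcase $|\cA_{i-1}|=1$, $\ell_i^+$ is the special increasing line, and since $|\cA_i|=1$ there was at most one increasing line in $\cL_i$ attached to a degree-$>1$ vertex of $\cA_{i-1}$—but there are none, since the unique point of $\cA_{i-1}$ has degree $1$—so again $\ell_i^+$ is new. In the $c_i=2$ case, $\ell_i^+$ is a decreasing line distinct from the unique decreasing line of $\cL_i$: the latter is $\langle\phi(q_{s_1}),\phi(q_{s_2})\rangle$ with both defining points in $\cA_{i-1}$ satisfying $\mathfrak D_2$, whereas $\ell_i^+$ satisfies $\mathfrak D_1'$ and in particular misses the neighbors of $q_2$, so the two cannot coincide. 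I do not expect any real obstacle here—the only mild subtlety is keeping straight, in the $c_i=1,\ |\cA_{i-1}|=1$ subcase, why $|\cA_i|=1$ (the text already argues this: otherwise the single point of $\cA_{i-1}$ would have degree $\geq2$, forcing $c_i=0$), which is what makes the count $|\cL_i'|\geq 1=|\cA_i|$ tight and correct.
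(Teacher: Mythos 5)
Your proposal is correct and is essentially the paper's own (implicit) argument: the lemma is stated there as an observation resting on the earlier claim $|\cL_i|\ge d_i+\binom{c_i}{2}$ with $c_i+d_i=|\cA_i|$, the fact that a deficiency is only possible when $c_i\in\{1,2\}$, and the extra line $\ell_i^+$ added exactly in those cases, whose distinctness from the lines of $\cL_i$ follows from the listed $\mathfrak{D}$/$\mathfrak{I}$ properties (and is subsumed in Lemma~\ref{lem:distinct1}). One cosmetic nitpick: in your $c_i=2$ case the property doing the work is $\mathfrak{D}_1$ for the decreasing line of $\cL_i$ (it contains all of $\cA_{i-1}$) against $\mathfrak{D}_1'$ for $\ell_i^+$ --- or, equally well, that $q_2\in\ell_i^+$ while $\mathfrak{D}_2$ excludes $q_2$ from the $\cL_i$-line --- so your attribution of the omissions to ``$\mathfrak{D}_2$'' is mislabeled, though the argument is unaffected.
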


\section{Linear lower bound for arbitrary sets of points}
\begin{lemma}
\label{lem:notwo1}
Let $X$ be a set of non-collinear points in the plane. Moreover suppose that $X$ has an $n$-element subset in which no two points share their $x$-coordinates and no two points share their $y$-coordinates. Then $X$ induces at least $n/2$ distinct lines. 
\end{lemma}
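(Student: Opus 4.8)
The plan is to combine the family of lines $\cL'$ built in \sref{many} with the decreasing-sequence version of \lref{monseqline}. First I would apply the construction of the previous section to the given $n$-element subset $Y\subseteq X$ (no two points of $Y$ sharing an $x$- or $y$-coordinate), obtaining the layers $\cA_0,\dots,\cA_k$ and the line family $\cL'=\bigcup_{i>0}\cL_i'$. By \lref{distinct1} the lines in $\cL'$ are pairwise distinct, and by \lref{size3} we have $|\cL_i'|\ge|\cA_i|$ for every $i>0$; since the layers partition the $n$-element set $Y$, this gives
\[
 |\cL'|\;=\;\sum_{i>0}|\cL_i'|\;\ge\;\sum_{i>0}|\cA_i|\;=\;n-|\cA_0|.
\]

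Next I would analyze the bottom layer $\cA_0$. It is the set of minimal elements of $(Y,\preceq)$, hence an antichain: any two of its points are $\preceq$-incomparable. Because no two points of $Y$ share an $x$- or a $y$-coordinate, every pair of points of $Y$ is either increasing or decreasing, and by the definition of $\preceq$ an incomparable pair is exactly a decreasing pair. Thus $\cA_0$ is a decreasing sequence of $|\cA_0|$ points lying in $X$. Since $X$ is non-collinear, the decreasing analogue of \lref{monseqline} shows that $X$ induces at least $|\cA_0|$ distinct lines. Combining the two estimates, $X$ induces at least $\max\{\,n-|\cA_0|,\ |\cA_0|\,\}\ge n/2$ distinct lines, which is the claim.

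I do not expect a genuine obstacle here, since the real work is already packaged into \lref{distinct1} and \lref{size3}; the argument is just a worst-case balancing of the two bounds against the single parameter $|\cA_0|$. The only points requiring a little care are (i) that the small-$c_i$ cases ($c_i\in\{1,2\}$) have already been absorbed into \lref{size3}, so that $|\cL_i'|\ge|\cA_i|$ holds uniformly for all $i>0$, and (ii) that \lref{monseqline} is being applied to $\cA_0$ as a subset of the non-collinear set $X$ — which is legitimate, as non-collinearity is a property of $X$ and the decreasing sequence $\cA_0$ sits inside $X$.
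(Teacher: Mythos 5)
Your proof is correct and follows essentially the same route as the paper's: bound $|\cL'|\ge\sum_{i>0}|\cA_i|=n-|\cA_0|$ via Lemma~\ref{lem:size3}, observe that $\cA_0$ is a decreasing sequence (an antichain of $\preceq$ in a set with all coordinates distinct), apply the decreasing version of Lemma~\ref{lem:monseqline}, and balance the two bounds against $|\cA_0|$. Your explicit remarks on why the antichain is a decreasing sequence and why Lemma~\ref{lem:monseqline} applies inside the larger non-collinear set $X$ are just slightly more detailed versions of what the paper leaves implicit.
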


\begin{proof}
The $n$-element subset of $X$ satisfies the conditions placed on $Y$ in Section~\ref{sec:many}. Let us use the notation and results introduced in that section.
Since $|\cL'|=\sum_{i>0} |\cL_i'|$, Lemma~\ref{lem:size3} implies that 
\[
|\cL'|\ge \sum_{i>0}|\cA_i|.
\]
If this sum is at least $n/2$, then we have found $n/2$ distinct lines determined by $X$. Otherwise, since points in $\cA_0$ form a decreasing sequence, $X$ induces at least $|\cA_0|=n-\sum_{i>0}|\cA_i|>n/2$ distinct lines by \lref{monseqline}.
\end{proof}


\begin{lemma}
\label{lem:horiz} 
Let $\{(a,y_1),(a,y_2)\}$ and $\{(b,y_1'),(b,y_2')\}$ be two vertical pairs in $X$ such that the pairs $\{y_1,y_2\}$ and $\{y_1',y_2'\}$ are distinct. Then the lines $\langle (a,y_1),(a,y_2) \rangle$ and $\langle (b,y_1'),(b,y_2')\rangle$ are distinct.
\end{lemma}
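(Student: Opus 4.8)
The plan is to understand exactly what a line determined by a vertical pair looks like, and then show that the defining data $\{y_1,y_2\}$ (together with the common $x$-coordinate $a$) can be recovered from the point set of the line, so that two vertical pairs with different $y$-sets give different lines. Recall from the description in Section~\ref{sec:monot} that the line $\langle (a,y_1),(a,y_2)\rangle$, with $y_1<y_2$ say, consists of the closed segment $\{(a,y): y_1\le y\le y_2\}$ together with the two half-planes $\{(x,y): y\ge y_2\}$ and $\{(x,y): y\le y_1\}$. In particular, the complement of this line (within the plane) is the pair of open horizontal strips $\{(x,y): y_1<y<y_2,\ x\ne a\}$. So the line ``remembers'' the two horizontal lines $y=y_1$ and $y=y_2$ as the boundary levels of the forbidden strip, and it remembers $a$ as the unique $x$-value that is allowed inside the strip.

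First I would set up the two lines $\ell=\langle (a,y_1),(a,y_2)\rangle$ and $\ell'=\langle (b,y_1'),(b,y_2')\rangle$, assuming without loss of generality $y_1<y_2$ and $y_1'<y_2'$, and suppose for contradiction that $\ell=\ell'$ as subsets of $X$ — but note we must be careful, since these are lines \emph{in the metric space $X$}, i.e.\ intersections of the ambient geometric lines with $X$, so equality of the ambient figures is not directly available. Here is where the hypothesis that $X$ genuinely \emph{contains} the four points $(a,y_1),(a,y_2),(b,y_1'),(b,y_2')$ does the work: each of these four points actually lies in $X$, hence in whichever of $\ell,\ell'$ it belongs to. Now $(a,y_1)$ and $(a,y_2)$ lie on $\ell$; if $\ell=\ell'$ then they lie on the ambient figure of $\ell'$, which forces $y_1,y_2\notin(y_1',y_2')$ unless $a=b$. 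Symmetrically $y_1',y_2'\notin(y_1,y_2)$ unless $a=b$. A short case analysis on the relative order of the four numbers $y_1,y_2,y_1',y_2'$ then shows that either the two $y$-pairs coincide (contradicting the hypothesis) or we are forced into $a=b$.

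So the residual case is $a=b$. Then both vertical pairs sit on the same vertical line $x=a$, and the ambient figures of $\ell$ and $\ell'$ both contain the entire vertical line $x=a$ but have different forbidden strips (since $\{y_1,y_2\}\ne\{y_1',y_2'\}$), so their symmetric difference is a non-empty union of horizontal strips; I then need to exhibit a point of $X$ in that symmetric difference. The obstacle is exactly this: the symmetric difference of the \emph{ambient} figures is non-empty, but we need a point of the finite set $X$ in it, and with only the four given points to play with this may fail — e.g.\ if $\{y_1',y_2'\}=\{y_1, y_2+1\}$ and no point of $X$ has $y$-coordinate in $(y_2,y_2+1)$ off the line $x=a$. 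I expect this to be the main gap, and I would close it by either (a) invoking an additional hypothesis or earlier normalization in the paper that rules out $a=b$ for the vertical pairs under consideration (plausibly these lemmas are applied to vertical pairs with \emph{distinct} $x$-coordinates, in which case only the first part of the argument is needed), or (b) observing that $(a,y_1)\in\ell$, and checking whether $(a,y_1)\in\ell'$: since $y_1\notin\{y_1',y_2'\}$ in this sub-case, $(a,y_1)$ lies on $\ell'$ only if $y_1\le y_1'$ or $y_1\ge y_2'$, and running this together with the analogous membership tests for $(a,y_2),(a,y_1'),(a,y_2')$ forces $\{y_1,y_2\}=\{y_1',y_2'\}$, the desired contradiction. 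The clean statement to aim for is that among the four points $(a,y_1),(a,y_2),(a,y_1'),(a,y_2')\in X$, at least one distinguishes $\ell$ from $\ell'$ unless the $y$-sets agree.
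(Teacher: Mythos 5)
You correctly describe the shape of a line determined by a vertical pair, but both halves of your plan have genuine gaps, and both are missing the same ingredient: a witness point of $X$ supplied by the standing assumption that no line contains all of $X$. In the case $a\ne b$, your ``short case analysis'' on the four defining points does not go through in the disjoint (or touching) configuration $y_1<y_2\le y_1'<y_2'$: there all four points $(a,y_1),(a,y_2),(b,y_1'),(b,y_2')$ lie on \emph{both} lines, so their memberships impose no contradiction, and indeed without non-collinearity the statement is false --- for $X=\{(0,0),(0,1),(5,2),(5,3)\}$ both vertical pairs define the line equal to all of $X$, although the $y$-pairs differ. The paper closes exactly this case by using the fact that some point $p\notin\langle(a,y_1),(a,y_2)\rangle$ exists; such a $p$ must lie in the open strip $y_1<y<y_2$ with $x\ne a$, hence $p\in\langle(b,y_1'),(b,y_2')\rangle$, and the lines differ.

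In the case $a=b$ neither of your proposed fixes works. Fix (a) fails because the lemma is applied (through Lemma~\ref{dist} and Lemma~\ref{vert}) precisely to many pairs lying on the \emph{same} vertical line, so this case cannot be assumed away. Fix (b) fails because when $a=b$ every point with $x$-coordinate $a$ belongs to both lines (a point $(a,y)$ is either between the defining points or in one of the two half-planes), so the membership criterion ``$y_1\le y_1'$ or $y_1\ge y_2'$'' you invoke is the one valid only for points with $x\ne b$, and the four defining points can never distinguish $\ell$ from $\ell'$. The paper's argument instead produces, for each pair of consecutive $y$-levels $y_i<y_{i+1}$ of the points of $X$ on the line $x=a$, a point $p'_i\in X$ with $x$-coordinate different from $a$ and $y$-coordinate strictly between $y_i$ and $y_{i+1}$ (if no such point existed, $\langle p_i,p_{i+1}\rangle$ would contain all of $X$); then $\langle p_l,p_m\rangle$ omits exactly $p'_l,\dots,p'_{m-1}$, so distinct pairs on the same vertical line give distinct lines. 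Supplying these witness points, rather than arguing only with the four defining points, is the step your plan is missing.
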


\begin{proof}

Suppose first that $a=b$. 
If $p_1,...,p_k \in X$ are points with the same $x$-coordinate $a$ and with $y$-coordinates $y_1,...,y_k$, then for any integer $i$ with $1 \le i <k$, there exists a point $p'_i=(x'_i,y'_i) \in X$ such that $a\ne x'$ and $y_i <y'_i<y_{i+1}$. 
For $l<m$, the line $\langle p_l,p_m \rangle$ contains all the points $p'_i$ except for $p'_l,\dots,p'_{m-1}$. It follows that the lines $\langle p_l,p_m\rangle$ are pairwise distinct for all distinct pairs $\{p_l,p_m\}$.

If $a\neq b$
and $y_1<y_2\le y_1'<y_2'$, then for any point $p\notin \langle (a,y_1),(a,y_2)\rangle$ we have 
$p \in \langle (b,y_1'),(b,y_2')\rangle$. If $y_1<y_1'<y_2<y_2'$, then $(b,y_1') \notin \langle(a,y_1),(a,y_2)\rangle$. Finally, if $y_1<y_1'<y_2'<y_2$, then again $(b,y_1') \notin \langle(a,y_1),(a,y_2)\rangle$.
\end{proof}

\begin{lemma}\label{dist}
 Let $\{(a,y_1), (a,y_2)\}$ and $\{(b,y_1'), (b,y_2')\}$ be two distinct vertical pairs of points in $X$ such that there exists a point $(a,y_3)\in X$ with $y_3$ strictly between $y_1$ and $y_2$.
 Then the line $\ell_1=\langle (a,y_1), (a,y_2)\rangle$ does not coincide with the line $\ell_2=\langle(b,y_1'),(b,y_2')\rangle$. 
\end{lemma}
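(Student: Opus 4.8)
The hypothesis gives us a vertical pair $\{(a,y_1),(a,y_2)\}$ that is ``split'' by a third point $(a,y_3)$ of $X$ lying strictly between, and a distinct vertical pair $\{(b,y_1'),(b,y_2')\}$; we must show $\ell_1\neq\ell_2$. I would split the argument according to whether $a=b$ or $a\neq b$, and reduce the $a\neq b$ case as far as possible to \lref{horiz}.

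\textbf{The case $a=b$.} Here $\ell_1$ and $\ell_2$ are both determined by vertical pairs on the same vertical line $x=a$, but with distinct unordered pairs of $y$-coordinates. This is exactly the situation handled in the first paragraph of the proof of \lref{horiz}: if $p_1,\dots,p_k$ are all points of $X$ on the line $x=a$, listed by increasing $y$-coordinate, then between any two consecutive ones there is a point $p_i'$ of $X$ off the line $x=a$, and $\langle p_l,p_m\rangle$ omits precisely $p_l',\dots,p_{m-1}'$ among these. Since $\{y_1,y_2\}\neq\{y_1',y_2'\}$ the two index intervals $[l,m)$ differ, so the two lines have different traces on $\{p_i'\}$ and hence are distinct. (Note the point $(a,y_3)$ is what guarantees there are at least three points on $x=a$, so this part is non-vacuous, but the mechanism is just the one already used.)

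\textbf{The case $a\neq b$.} Now \lref{horiz} already settles everything \emph{except} when the pairs $\{y_1,y_2\}$ and $\{y_1',y_2'\}$ are equal as sets; so assume $y_1'=y_1$ and $y_2'=y_2$ (with $y_1<y_2$, say). Then $\ell_1$ and $\ell_2$ are determined by ``parallel'' vertical pairs at the same height, sitting over $x=a$ and $x=b$ respectively. Without loss of generality $a<b$. The point $(a,y_3)$ with $y_1<y_3<y_2$ lies on $\ell_1$: indeed $[(a,y_1)\,(a,y_3)\,(a,y_2)]$ holds since the three are collinear on a vertical segment. I claim $(a,y_3)\notin\ell_2$. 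Recall the description of a vertical line from \sref{monot}: $\langle(b,y_1),(b,y_2)\rangle$ consists of the vertical segment $\{(b,y):y_1\le y\le y_2\}$ together with the two half-planes $\{x\le b\}$\dots wait — one must use the correct shape; the analogue of the horizontal description is that $\langle(b,y_1),(b,y_2)\rangle=\{(b,y):y_1\le y\le y_2\}\cup\{(x,y):y\le y_1\}\cup\{(x,y):y\ge y_2\}$. Since $y_1<y_3<y_2$ and $a\neq b$, the point $(a,y_3)$ lies in none of these three sets, so $(a,y_3)\notin\ell_2$ while $(a,y_3)\in\ell_1$, giving $\ell_1\neq\ell_2$.

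\textbf{Main obstacle.} The only real subtlety is bookkeeping: making sure the reduction in the $a\neq b$ case leaves exactly the single subcase $\{y_1,y_2\}=\{y_1',y_2'\}$ not covered by \lref{horiz}, and then correctly writing down which of the ``diagonal vs.\ vertical'' line shapes from \sref{monot} applies so that the separating point $(a,y_3)$ is verified to be off $\ell_2$. Everything else is a direct appeal to \lref{horiz} and to the structure of vertical lines already recorded; no new idea is needed beyond exploiting the extra point $(a,y_3)$ to break the one tie that \lref{horiz} does not resolve.
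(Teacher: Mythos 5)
Your proof is correct and follows essentially the same route as the paper: the case of distinct $y$-coordinate pairs is delegated to Lemma~\ref{lem:horiz}, and the only remaining case, $\{y_1,y_2\}=\{y_1',y_2'\}$ (which forces $a\neq b$), is settled by observing that $(a,y_3)$ lies on $\ell_1$ but not on $\ell_2$, using the correct description of a vertical line as the segment plus the two horizontal half-planes. Your extra split into $a=b$ versus $a\neq b$ is just a finer bookkeeping of the same argument.
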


\begin{proof}
 If $\{y_1,y_2\}=\{y_1',y_2'\}$, then $a\neq b$ and $(a,y_3)\in \ell_1\setminus \ell_2$. If the two pairs of $y$-coordinates do not coincide, then the lines are distinct by Lemma~\ref{lem:horiz} 
\end{proof}

Analogous pair of lemmas holds for horizontal lines.

\begin{lemma}\label{vert}
 Let $X$ be a set of $n$ points such that each point shares its $x$-coordinate with at least four others. Then $X$ determines at least $n$ (vertical) lines.
\end{lemma}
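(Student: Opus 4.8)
The plan is to reduce the statement about vertical lines to the combinatorial bookkeeping already set up by Lemmas~\ref{lem:horiz} and~\ref{dist}. Partition $X$ according to $x$-coordinates: say the distinct $x$-coordinates appearing are $a_1,\dots,a_m$, and the column above $a_j$ contains the points with $y$-coordinates $y^{(j)}_1<y^{(j)}_2<\dots<y^{(j)}_{k_j}$, where by hypothesis $k_j\ge 5$ for every $j$ (each point shares its column with at least four others). Write $n=\sum_j k_j$. For each column $j$ I would simply count the vertical lines whose defining pair lies entirely in that column. If for each column I can exhibit at least $k_j$ distinct vertical lines with both defining points in column $j$, and if I can show that lines coming from different columns are always distinct, then summing gives at least $\sum_j k_j=n$ vertical lines and we are done.

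First I would handle distinctness across columns. Two vertical pairs in different columns $j\ne j'$ have different $x$-coordinates; by Lemma~\ref{lem:horiz} the corresponding lines are distinct unless the two $y$-coordinate pairs coincide, and in that degenerate situation Lemma~\ref{dist} still separates them provided at least one of the two pairs has a third point of $X$ strictly between its $y$-coordinates. Since $k_j\ge 5$, I can always choose the pairs I use within a column so that they are ``non-consecutive'' (there is a point of the same column strictly between the two chosen $y$-coordinates), which guarantees the hypothesis of Lemma~\ref{dist} is met. So cross-column distinctness is free once I am slightly careful about which pairs I select.

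Next, the within-column count. Inside a single column $j$ with $k_j$ points, the proof of Lemma~\ref{lem:horiz} (the case $a=b$) already shows that the $\binom{k_j}{2}$ lines $\langle p_l,p_m\rangle$ are pairwise distinct, because for each consecutive gap there is a witness point $p'_i$ off the column with $y$-coordinate strictly between $y^{(j)}_i$ and $y^{(j)}_{i+1}$, and the set of such witnesses contained in $\langle p_l,p_m\rangle$ is exactly $\{p'_i : i<l \text{ or } i\ge m\}$, which determines the pair $\{l,m\}$. Thus column $j$ already contributes $\binom{k_j}{2}$ distinct vertical lines, and $\binom{k_j}{2}\ge k_j$ whenever $k_j\ge 3$, which holds since $k_j\ge 5$. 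To make the cross-column argument go through uniformly, I would restrict attention, within each column, to the $\binom{k_j}{2}-\binom{2}{2}$ or so pairs that skip at least one point — there are still at least $k_j$ of these for $k_j\ge 5$ — so that every pair I count has a point of its own column strictly between its endpoints and Lemma~\ref{dist} applies to all cross-column comparisons at once.

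The main obstacle is the interaction between the two distinctness arguments: the within-column witnesses used in Lemma~\ref{lem:horiz} are points \emph{outside} the column, whose existence is automatic (between any two points sharing an $x$-coordinate there must be a point with a different $x$-coordinate, else $X$ would be locally collinear on a vertical line — and in any case the argument only needs such a point, which is part of the lemma's proof), while the cross-column argument needs witnesses \emph{inside} the column, which forces the ``skip a point'' restriction and hence the hypothesis $k_j\ge 5$ rather than merely $k_j\ge 2$. I would make sure these two requirements are compatible by committing, once and for all, to the family of non-consecutive pairs in each column, checking that it has size $\ge k_j$, that the within-column lines it yields are distinct (inherited from Lemma~\ref{lem:horiz}), and that any two lines from different columns are distinct by Lemma~\ref{dist}. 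Assembling these three facts gives the bound of $n$ vertical lines.
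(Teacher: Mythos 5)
Your proposal is correct and follows essentially the same route as the paper: restrict to the non-consecutive vertical pairs in each column, note there are at least $k_j$ of them when $k_j\ge 5$, and get distinctness of all the resulting lines from Lemma~\ref{dist} (with Lemma~\ref{lem:horiz} covering the case of differing $y$-pairs). The only cosmetic fix needed is the count of non-consecutive pairs in a column, which is $\binom{k_j}{2}-(k_j-1)=\binom{k_j-1}{2}$ rather than ``$\binom{k_j}{2}-\binom{2}{2}$ or so''; this is indeed at least $k_j$ for $k_j\ge 5$, as you claim.
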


\begin{proof}
 A set of $m$ points that all share the same $x$-coordinate determines at least $\binom{m-1}{2}$ non-consecutive pairs of points. If $m\geq 5$, this is greater than $m$. Overall we have at least $n$ non-consecutive vertical pairs. These determine distinct lines by Lemma~\ref{dist}.  
\end{proof}


\begin{proof}[Proof of Theorem~\ref{thm:main2}.]
 Let $0\leq c,d\leq 1$ be some constants. If at least $cn$ points share their $x$-coordinates with at least four other points, then we have $cn$ vertical lines by Lemma~\ref{vert}. If not, we have at least $(1-c)n$ points that share their $x$-coordinate with at most three other points. Deleting some points we get a set of at least $(1-c)n/4$ points with unique $x$-coordinates. Again, if at least $d$-fraction of these points share their $y$-coordinates with at least four other points, then we have $(1-c)dn/4$ horizontal lines. If not, deleting some points we get a set of at least $(1-d)(1-c)n/16$ points such that no two share $x$- or $y$-coordinate. By Lemma~\ref{lem:notwo1} we then obtain at least $(1-d)(1-c)n/(16\cdot 2)$ lines. Set $d=1/9$, $c=1/37
$.  
\end{proof}

Since our method does not yield the conjectured lower bound of $n$, we did not attempt to improve the multiplicative constant $1/37$.

\section{Additional lines for non-degenerate sets}

If $X=Y$, we can prove a stronger lower bound on the number of lines.
In this section, we will assume that $X$ is a non-collinear set of $n$ points in the plane, 
such that no two points share their $x$- or $y$-coordinate.

Define the set $\cL'$ of lines as in Section~\ref{sec:many}.
Note that in this case all lines $\ell^+_i$ are decreasing.
We need to add a set $\cL_0$ of new lines so that $|\cL_0 \cup \cL'_1| \ge |\cA_0 \cup \cA_1|$ holds. 

Partition $\cA_0$ into four parts $A_{0,0}, A_{0,h}, A_{0,-},A_{0,+}$. A point $p \in \cA_0$ belongs to $A_{0,0}$ if 
there exists no $q$ such that $p,q$ is an increasing pair. 
A point $p \in \cA_0\setminus A_{0,0}$ belongs to $A_{0,h}$ if 
it belongs to some increasing pair but all pairs $p,q$ with $q\in \cA_1$ are decreasing. 
$A_{0,-}$ consists of the points of $\cA_0$ that have degree~1 in $G_1$, and $A_{0,+}$ is set of points in $\cA_0$ with degree at least 2 in $G_1$. 

Let us first introduce the new increasing lines. For any point $p \in \cA_0 \setminus A_{0,0}$ let $h(p)$ denote the minimum index $i$ such that there exists a point $q \in \cA_i$ with $p \prec q$. We define
\[
\cL_{0,incr}=\{\langle p,q\rangle: p \in A_{0,-} \cup A_{0,h} ,q \in \cA_{h(p)}, p\prec q\}.
\]

Every line $\ell=\langle p,q\rangle \in \cL_{0,incr}$ has the following properties:

\begin{itemize}
\item
$\mathfrak{I}_1''$: $\ell \cap \cA_0=\{p\}, \ell \cap \cA_{h(p)}=q$, and for all $0 <j<h(p)$ we have $\ell \cap \cA_j=\emptyset$,
\item
$\mathfrak{I}_4''$: for any $j\ge h(p)$, if $p_j \in \cA_j \cap \ell$, then for all $p_{j+1} \in \cA_{j+1}$ with $p_j \prec p_{j+1}$ we have $p_{j+1} \in \ell$.
\end{itemize}

\begin{lemma} 
The lines in $\cL'\cup \cL_{0,incr}$ are all distinct.
\end{lemma}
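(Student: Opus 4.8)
The plan is to show that $\cL' \cup \cL_{0,incr}$ has no coincidences by checking each type of pair. Since every line in $\cL'$ has at least one defining point in $Y = X$ and no two points of $X$ share a coordinate, an increasing line cannot equal a decreasing line; so, given that \lref{distinct1} already handles pairs within $\cL'$, the only new cases are (i) a line in $\cL_{0,incr}$ versus a decreasing line in $\cL'$, and (ii) two lines in $\cL_{0,incr}$. For case (i), a decreasing line $\ell_2 \in \cL'_j$ satisfies $\mathfrak{D}_1$ (or $\mathfrak{D}_1'$) and $\mathfrak{D}_3$, so it contains points in $\cA_{j-1}$, whereas $\mathfrak{I}_1''$ says the increasing line $\ell_1 = \langle p,q\rangle$ meets $\cA_0$ only in $p$ and skips all layers strictly between $\cA_0$ and $\cA_{h(p)}$; if $j - 1$ is such a skipped layer we are done immediately, and otherwise (if $j-1 = 0$, or $j - 1 \ge h(p)$) we compare the defining point $p \in A_{0,-} \cup A_{0,h}$ of $\ell_1$ against the structure of $\ell_2$: since $\ell_1$ is an increasing line, its slope is incompatible with $\ell_2$ once we observe that a defining point of $\ell_2$ in $\cA_{j-1}$ with $j-1 \ge h(p)$ would have to lie on $\ell_1$ by repeated application of $\mathfrak{I}_4''$, but then $\mathfrak{D}_2$ (the defining point's $G_j$-neighbors are missing from $\ell_2$) produces a point in $\ell_1 \setminus \ell_2$.

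For case (ii), let $\ell_1 = \langle p,q\rangle \in \cL_{0,incr}$ and $\ell_2 = \langle p',q'\rangle \in \cL_{0,incr}$ with $p \ne p'$ (distinct since both lie in $\cA_0$ and $\mathfrak{I}_1''$ pins down the intersection with $\cA_0$). Two increasing lines through $\cA_0$ are distinct as soon as their intersections with $\cA_0$ differ, because each such line determines the unique quarter-plane hanging off its lower-left defining point and the unique one off its upper-right endpoint; concretely, $p \in \ell_1 \setminus \ell_2$ unless $p$ happens to lie in the tail of $\ell_2$, but by $\mathfrak{I}_1''$ applied to $\ell_2$ the only point of $\cA_0$ on $\ell_2$ is $p'$, and $p \in \cA_0$, so $p \notin \ell_2$. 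That settles it.

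I would organize the proof as: first dispatch the increasing-vs-decreasing dichotomy via the coordinate-uniqueness of $X$; then invoke \lref{distinct1} verbatim for pairs inside $\cL'$; then handle $\cL_{0,incr}$ versus decreasing lines of $\cL'$ using $\mathfrak{I}_1''$, $\mathfrak{I}_4''$ against $\mathfrak{D}_1$, $\mathfrak{D}_1'$, $\mathfrak{D}_2$, $\mathfrak{D}_3$; and finally handle two lines of $\cL_{0,incr}$ using only $\mathfrak{I}_1''$. The main obstacle I anticipate is the sub-case of (i) where $j - 1 \ge h(p)$, i.e. the decreasing line $\ell_2$ lives at a layer at or above where $\ell_1$'s upper defining point sits: here one has to argue carefully that following the increasing chain witnessing membership in $\ell_1$ (via $\mathfrak{I}_4''$) forces a defining point of $\ell_2$ onto $\ell_1$, and then use the "missing neighbor" property $\mathfrak{D}_2$ of that defining point — which lies on $\ell_1$ but is absent from $\ell_2$ — to separate the lines. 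The bookkeeping of which layer contains which defining point, and the edge case $h(p) = 1$ versus $h(p) > 1$, is where care is needed; everything else is a direct appeal to the already-established $\mathfrak{I}$ and $\mathfrak{D}$ properties.
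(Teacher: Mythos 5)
Your case analysis is inverted, and the case that carries the real content of the lemma is missing. Lines in $\cL_{0,incr}$ are by definition increasing lines ($p\prec q$), so by the very dichotomy you invoke at the start (valid here because $X=Y$ is non-degenerate), a line of $\cL_{0,incr}$ can never coincide with a decreasing line of $\cL'$; your case (i), on which you spend most of the proof, therefore needs no argument at all (and as written it is also muddled: $\mathfrak{I}_4''$ does not force a defining point of $\ell_2$ onto $\ell_1$, it only propagates membership upward from points already known to lie on $\ell_1$). What does need an argument, and what your proposal never addresses, is a line $\ell_1=\langle p,q\rangle\in\cL_{0,incr}$ versus an \emph{increasing} line $\ell_2$ of some $\cL_i\subseteq\cL'$; here the increasing/decreasing dichotomy gives nothing, and this is exactly what the paper's proof handles. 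For $i\le h(p)$ it plays $\mathfrak{I}_1''$ against $\mathfrak{I}_1$: if $i<h(p)$, then $\ell_1$ misses the layer $\cA_i$ which $\ell_2$ meets; if $i=h(p)>1$, then $\ell_1$ misses $\cA_{i-1}$; if $i=h(p)=1$, then the $\cA_0$-defining point of $\ell_2$ lies in $A_{0,+}$ while $p\in A_{0,-}\cup A_{0,h}$. For $i>h(p)$ it plays $\mathfrak{I}_4''$ against $\mathfrak{I}_2$: if $\ell_1=\ell_2$, the defining point $p'\in\cA_{i-1}$ of $\ell_2$ lies on $\ell_1$, so by $\mathfrak{I}_4''$ every $q''\in\cA_i$ with $p'\prec q''$ lies on $\ell_1=\ell_2$, contradicting $\mathfrak{I}_2$. (In this section all lines $\ell^+_i$ are decreasing, so every increasing line of $\cL'$ does satisfy $\mathfrak{I}_1$ through $\mathfrak{I}_4$.) Without this comparison the lemma is not proved.

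There is also a smaller gap in your case (ii): you only treat $p\neq p'$. Two lines of $\cL_{0,incr}$ can share their lower defining point: if $p\in A_{0,h}$, then $p$ may have several $\prec$-successors $q\neq q'$ in $\cA_{h(p)}$, and both $\langle p,q\rangle$ and $\langle p,q'\rangle$ belong to $\cL_{0,incr}$. These are separated by the $\cA_{h(p)}$-part of $\mathfrak{I}_1''$ (namely $\ell\cap\cA_{h(p)}=\{q\}$), which your argument never uses; the $\cA_0$-part alone does not suffice. This is easy to repair, but the decisive missing piece remains the comparison with the increasing lines of $\cL'$ described above.
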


\begin{proof}
Property $\mathfrak{I}_1''$ ensures that two lines $\langle p,q\rangle$ and $\langle p',q'\rangle$ that both belong to $\cL_{0,incr}$ are distinct unless $\{p,q\}=\{p',q'\}$. A line $\langle p,q\rangle=\ell_1 \in \cL_{0,incr}$ is different from any increasing line $\ell_2 \in \cup_{i=1}^{h(p)}\cL_i$ because $\ell_1$ satisfies $\mathfrak{I}_1''$ and $\ell_2$ satisfies $\mathfrak{I}_1$. If $\ell_2 \in \cup_{i>h(p)}\cL_i$, then $\ell_1 \neq \ell_2$ because $\ell_1$ satisfies $\mathfrak{I}_4''$ and $\ell_2$ satisfies $\mathfrak{I}_2$. 
\end{proof}



To introduce the new decreasing lines, let $A_{1,c}$ denote the set of all elements of $\cA_1$ such that all their neighbors in $G_1$ have degree~1 in $G_1$ (i.e., $|A_{1,c}|=c_1$). Writing $A_{1,d}=\cA_1 \setminus A_{1,c}$, consider the induced subgraph $G'=G_1[A_{1,d},A_{0,+}]$. Let us create a final partition of $A_{1,d}$ and $A_{0,+}$. Let $A$ consist of the points of $A_{1,d}$ that have degree~1 in $G'$ and let $C \subseteq A_{0,+}$ be the set of neighbors of $A$ in $G'$. Write $B=A_{1,d}\setminus A$ and $D=A_{0,+}\setminus C$. Let $p_1,p_2,...,p_s$ be the enumeration of $C$ in increasing order according to their $x$-coordinates and let $q_k^1,q_k^2,...,q_k^{m(k)}$ be the neighbors of $p_k$ in $A$ enumerated again in increasing order according to their $x$-coordinates. Let $u_k$ be either the rightmost point in $A_{0,-} \cup A_{0,+}$ that has larger $y$-coordinate than $q_k^1$ or the leftmost point in $A_{0,-} \cup A_{0,+}$ that has larger $x$-coordinate than $q_k^{m(k)}$. If $k \ge 2$ then $p_{k-1}$ has larger $y$-coordinate than $q_k^1$ while if $k \le s-1$, then $p_{k+1}$ has larger $x$-coordinate than $q_k^{m(k)}$. If this was not the case, $p_{k-1}$ and $q_k^1$ (or $p_{k+1}$ and $q_k^{m(k)}$) would form an increasing pair, contradicting that $q_k^1$ and $q_k^{m(k)}$ have degree~1 in $G'$. It follows that if $s \ge 2$,then $u_k$ exists for all $1\le k \le s$. If $s \ge 2$, define
\[
\cL_{0,decr}=\{\langle u_k,q_k^1\rangle: 1 \le k\le s\}.
\]

\begin{claim}
 Every line $\ell=\langle u_k,q_k^1\rangle \in \cL_{0,decr}$ has the following properties:
\begin{itemize}
\item
$\mathfrak{D}_1''$: $\ell$ contains all points of $\cA_0$ except for $p_k$ (this is because the only point in $\cA_0$ with which $q_k^1$ forms an increasing pair is $p_k$),
\item
$\mathfrak{D}_2''$: there exists a point $q \in \cA_1$ with $q \notin \ell$ (since $u_k\in A_{0,-} \cup A_{0,+}$, it has positive degree in $G_1$), 
\item
$\mathfrak{D}_3''$: for every $q \in \cA_1$ either $q \in \ell$ holds or there exists $p\in\cA_0$ with $p \prec q$ and $p\in \ell$ (if the only neighbor of $q$ is $p_k$, then by definition $q \in \ell$ holds, otherwise we are done by $\mathfrak{D}_1''$).
\end{itemize}
\end{claim}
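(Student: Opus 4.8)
The plan is to reduce all three properties to one geometric fact about decreasing lines and then read them off from the combinatorics of $u_k$, $q_k^1$, $p_k$ and the partition of $\cA_0$ and $\cA_1$. The fact I would isolate first is the decreasing analog of \pref{mon}: for a decreasing pair $a,b$ and a point $p$ sharing no coordinate with $a$ or $b$, one has $p\in\langle a,b\rangle$ if and only if $p$ forms an increasing pair with neither $a$ nor $b$. This follows from the explicit description of decreasing lines in \sref{monot} by a short case check on the location of $p$: the four axis-parallel lines through $a$ and $b$ cut the plane into regions, and on each region ``$p$ lies on the line'' and ``$p$ is, in the increasing sense, incomparable to both defining points'' coincide. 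After this, every assertion in the claim becomes a question about which points of $\cA_0\cup\cA_1$ form increasing pairs with $u_k$ or with $q_k^1$.

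Next I would pin down the defining pair. Because $u_k\in A_{0,-}\cup A_{0,+}\subseteq\cA_0$ is minimal in $(Y,\preceq)$, it cannot lie above $q_k^1\in\cA_1$; combined with the inequality defining $u_k$ (larger $y$-coordinate than $q_k^1$ in the first alternative, larger $x$-coordinate than $q_k^{m(k)}$ in the second) this forces $u_k$ to lie up-and-left, respectively down-and-right, of $q_k^1$, so $u_k,q_k^1$ is a decreasing pair and $\langle u_k,q_k^1\rangle$ is genuinely a decreasing line. Then $\mathfrak{D}_1''$ follows: $p_k$ is a neighbor of $q_k^1$ in $G'$, hence $p_k\prec q_k^1$, so $p_k$ forms an increasing pair with the defining point $q_k^1$ and $p_k\notin\ell$; while for any other $p\in\cA_0$, the pair $p,u_k$ is decreasing (both lie in the decreasing sequence $\cA_0$) and $p$ does not form an increasing pair with $q_k^1$ since the only point of $\cA_0$ below $q_k^1$ is $p_k$, so $p\in\ell$ by the characterization.

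For $\mathfrak{D}_2''$: since $u_k\in A_{0,-}\cup A_{0,+}$ it has a neighbor $q''\in\cA_1$ in $G_1$, so $u_k\prec q''$; then $q''$ forms an increasing pair with the defining point $u_k$, hence $q''\notin\ell$, and $q''\ne q_k^1$ because $u_k,q_k^1$ is decreasing. For $\mathfrak{D}_3''$: given $q\in\cA_1$ with $q\notin\ell$, the characterization forces $q$ to form an increasing pair with $u_k$ or with $q_k^1$; the latter is impossible, as $q$ and $q_k^1$ are incomparable elements of $\cA_1$ and hence form a decreasing pair, so $u_k\prec q$ and the defining point $u_k\in\cA_0\cap\ell$ is the point below $q$ that we need. (Alternatively one splits on whether $p_k$ is the unique $G_1$-neighbor of $q$ and invokes $\mathfrak{D}_1''$ in the remaining case, as in the parenthetical hint.)

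The step I expect to be the real obstacle is the structural input behind $\mathfrak{D}_1''$: that $q_k^1$ forms an increasing pair with no point of $\cA_0$ other than $p_k$, i.e. that $q_k^1$ has degree exactly $1$ in $G_1$ — and, together with it, checking in each of the two alternatives for $u_k$ that $u_k$ is positioned so that no point of $\cA_0\setminus\{p_k\}$ is comparable to it. This is where the enumeration of $C$ by $x$-coordinate and the choice of $q_k^1$ (resp. $q_k^{m(k)}$) as the extreme $A$-neighbor of $p_k$ must be used carefully; once the $G_1$-degree of $q_k^1$ is under control, everything else is a mechanical application of the decreasing-line characterization.
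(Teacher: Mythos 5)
Your reduction to the membership criterion for decreasing lines is correct, your verification that $u_k,q_k^1$ really is a decreasing pair (via minimality of $u_k\in\cA_0$) fills in a step the paper leaves implicit, and your treatments of $\mathfrak{D}_2''$ and $\mathfrak{D}_3''$ are sound --- in fact your argument for $\mathfrak{D}_3''$ is cleaner than the paper's parenthetical, since it uses only that $\cA_1$ is an antichain and that $u_k$ is a defining point, and does not invoke $\mathfrak{D}_1''$ at all.

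The gap is exactly where you place it, and it is genuine: you never establish that $p_k$ is the only point of $\cA_0$ forming an increasing pair with $q_k^1$ (equivalently, that $q_k^1$ has degree $1$ in $G_1$), and your plan to extract this from the enumeration of $C$ and the choice of $q_k^1$ as the leftmost $A$-neighbor of $p_k$ cannot succeed. Under the literal definition of $A$ (degree $1$ in $G'$) the fact is simply false: a point of $A_{1,d}$ can have, besides its unique neighbor in $A_{0,+}$, a further neighbor in $A_{0,-}$, and it then belongs to $A$ while having $G_1$-degree $2$. Concretely, take $\cA_0=\{(0,10),(1,9),(10,1)\}$ and $\cA_1=\{(2,11),(3,9.5),(11,2),(12,1.5)\}$ (a non-degenerate, non-collinear $7$-point set). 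Here $A_{0,-}=\{(0,10)\}$, $A_{0,+}=C=\{(1,9),(10,1)\}$, every point of $\cA_1$ has degree $1$ in $G'$, so $A=\cA_1$ and $s=2$; the leftmost $A$-neighbor of $p_1=(1,9)$ is $q_1^1=(2,11)$, which also has the $G_1$-neighbor $(0,10)\in A_{0,-}$. Since no point of $\cA_0$ lies above $(2,11)$, the first alternative for $u_1$ is unavailable, so $u_1=(10,1)$, and the line $\langle(10,1),(2,11)\rangle$ misses $(0,10)\in\cA_0$ in addition to $p_1$, so $\mathfrak{D}_1''$ fails as literally stated. What rescues the claim is not a geometric argument but the reading of the definition that the authors actually use later (in the proof of Lemma~\ref{size2}: ``vertices in $A$, by definition, have degree~$1$ in $G_1$''): $A$ is intended to consist of the points of $A_{1,d}$ of degree $1$ in $G_1$, and under that reading the fact you flag holds by definition, after which $\mathfrak{D}_1''$ follows exactly along the lines you wrote. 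So your proposal is complete only modulo this definitional point, and the derivation you hoped to give from the extreme-neighbor choice would fail.
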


\begin{lemma} 
The lines in $\cL'\cup \cL_{0,decr}$ are all distinct.
\end{lemma}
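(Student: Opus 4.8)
The plan is to mirror the proof of \lref{distinct1}. Since that lemma already gives that the lines of $\cL'$ are pairwise distinct, what remains is to check: (a) the lines of $\cL_{0,decr}$ are pairwise distinct; (b) no line of $\cL_{0,decr}$ equals an increasing line of $\cL'$; (c) no line of $\cL_{0,decr}$ equals a decreasing line of $\cL'$. First I would record that $\cL_{0,decr}\neq\emptyset$ forces $s=|C|\ge 2$; since $C\subseteq A_{0,+}\subseteq\cA_0$ and every point of $A$ has a $G'$-neighbour in $C$ (so $|A_{1,d}|\ge|A|\ge|C|$), this yields $|\cA_0|\ge 2$ and $|\cA_1|\ge 2$, which eliminates all ``special line'' cases in layers $0$ and $1$.

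Parts (a) and (b) are immediate. For (b), a line of $\cL_{0,decr}$ is decreasing and has the defining point $q_k^1\in\cA_1\subseteq X$; since no two points of $X$ share a coordinate, the argument opening the proof of \lref{distinct1} shows a decreasing line cannot equal an increasing one. For (a), $\mathfrak{D}_1''$ gives $\langle u_k,q_k^1\rangle\cap\cA_0=\cA_0\setminus\{p_k\}$, and as $p_1,\dots,p_s$ are distinct and $|\cA_0|\ge 2$, distinct indices $k$ give distinct traces on $\cA_0$, hence distinct lines.

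For (c), fix $\ell_1=\langle u_k,q_k^1\rangle\in\cL_{0,decr}$ and a decreasing line $\ell_2\in\cL'_j$. I would use two facts about $\ell_1$: that $\ell_1\cap\cA_0=\cA_0\setminus\{p_k\}$ with $p_k\in A_{0,+}$ (so $\deg_{G_1}p_k\ge2$), and that $\ell_1\cap\cA_1=\cA_1\setminus N_{G_1}(u_k)$ with $N_{G_1}(u_k)\neq\emptyset$ --- the latter read off from the shape of the decreasing line $\langle u_k,q_k^1\rangle$ and the minimality of $u_k$, using $\mathfrak{D}_2''$ for non-emptiness. If $j=1$, a decreasing line of $\cL'_1$ satisfies $\mathfrak{D}_1$ (meeting $\cA_0$ in all of $\cA_0$) or $\mathfrak{D}_1'$ (meeting $\cA_0$ in $\cA_0\setminus N_{G_1}(q_2)$, where every $G_1$-neighbour of $q_2$ has degree $1$); neither trace equals $\cA_0\setminus\{p_k\}$, the second because $p_k$ has degree $\ge2$. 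If $j\ge2$, by $\mathfrak{D}_3$ a defining point $w$ of $\ell_2$ lies in $\cA_{j-1}$ and comes with an increasing sequence $w_0\prec\dots\prec w_{j-1}=w$, $w_s\in\cA_s$, $w_s\notin\ell_2$ for $s\le j-2$; so $w_0\in\cA_0\setminus\ell_2$, forcing $w_0=p_k$ if $\ell_1=\ell_2$. For $j=2$ I would split on whether both defining points of $\ell_2$ lie in $\cA_1$ (then $\mathfrak{D}_1$ gives $\cA_1\subseteq\ell_2$, contradicting $\mathfrak{D}_2''$) or $\ell_2=\ell^+_2$ is the $c_2=2$ line (then compare $\cA_1$-traces via $\mathfrak{D}_1'$ against $\cA_1\setminus N_{G_1}(u_k)$); for $j\ge3$ one has also $w_1\in\cA_1\setminus\ell_1$, so $\mathfrak{D}_3''$ applied to $q=w_1$ produces an $\cA_0$-neighbour $p'\neq p_k$ of $w_1$ lying on $\ell_1$, and chasing $p_k$, $p'$, $w_1$ through the partition $A,B,C,D$ of $G'$ and the extremal choice of $u_k$ yields the contradiction.

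As in \lref{distinct1}, I expect the main obstacle to be the case $j\ge2$, and within it the sub-cases where $\ell_2$ is an exceptional line $\ell^+_j$ from a $c_j=2$ construction: there $\ell_2$ itself omits points of $\cA_{j-1}$, so the crude ``$\ell_1$ misses a point that $\ell_2$ contains'' dichotomy is unavailable and one must instead combine the precise constraints $\deg_{G_1}p_k\ge2$, $\ell_1\cap\cA_1=\cA_1\setminus N_{G_1}(u_k)$, $q_k^1\in A$ (hence of $G'$-degree $1$), and the extremal definition of $u_k$. Turning this bundle of constraints into a contradiction is the technical heart of the lemma; everything else is bookkeeping parallel to the earlier distinctness proofs.
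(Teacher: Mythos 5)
Your reduction (internal distinctness of $\cL_{0,decr}$ via $\mathfrak{D}_1''$, increasing-vs-decreasing via distinct coordinates, and the case $j=1$ via $\mathfrak{D}_1$/$\mathfrak{D}_1'$ together with $p_k\in A_{0,+}$) is correct and coincides with the paper's proof, and the observation that $s\ge 2$ rules out the special lines of $\cL'_1,\cL'_2$ is a fine (if unnecessary) addition. The gap is exactly where you say the ``technical heart'' lies, and your proposed mechanisms for it do not work or are not developed. For $j=2$ with $\ell_2=\ell^+_2$: comparing $\cA_1$-traces is not enough, because the trace of $\ell_1$ on $\cA_1$ is $\cA_1\setminus N_{G_1}(u_k)$ while that of $\ell^+_2$ is $\cA_1\setminus N_{G_2}(q_2)$, and nothing prevents a point $u_k\in\cA_0$ and a point $q_2\in\cA_2$ from having the same neighbourhood inside the antichain $\cA_1$; the separation must be found in another layer. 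For $j\ge 3$ you assemble the right data ($w_0=p_k$, $w_1\notin\ell_2$, and the point $p'\in\cA_0\cap\ell_1$ with $p'\prec w_1$ from $\mathfrak{D}_3''$) but then gesture at ``chasing through $A,B,C,D$ and the extremal choice of $u_k$'', which is not the relevant mechanism and is left undone.

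The missing idea in both cases is the second half of \pref{mon}: a point that starts an increasing chain ending at a defining point of a decreasing line cannot lie on that line. For $j=2$: take $q\in\cA_1\setminus\ell_1$ from $\mathfrak{D}_2''$; if $q\notin\ell_2$ then by $\mathfrak{D}_1'$ the point $q$ is a $G_2$-neighbour of the defining point $q_2\in\cA_2$ of $\ell^+_2$, so $q_2\in\ell_2$, while $u_k\prec q\prec q_2$ and the decreasing pair $u_k,q_k^1$ give $q_2\notin\ell_1$ by \pref{mon}. For $j\ge 3$: your point $p'$ lies on $\ell_1$, but $p'\prec w_1\prec\dots\prec w_{j-1}$, where $w_{j-1}$ is a defining point of the decreasing line $\ell_2$, so \pref{mon} gives $p'\notin\ell_2$, and the lines are distinct with no case analysis on $A,B,C,D$ at all. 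Without this chain argument the two hardest comparisons remain unproved, so as it stands the proposal does not establish the lemma.
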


\begin{proof}
For $k_1\neq k_2$, the lines $\langle u_{k_1},q_{k_1}^1\rangle$ and $\langle u_{k_2},q_{k_2}^1\rangle$ are distinct because of the property $\mathfrak{D}_1''$. If $\ell_1=\langle u_k,q_k^1\rangle$ and $\ell_2 \in\cL'_1$, then they are distinct because $\ell_1$ satisfies $\mathfrak{D}_1''$ and $\ell_2$ satisfies $\mathfrak{D}_1$ or $\mathfrak{D}_1'$, since we have $p_k \notin A_{0,-}$. 
If $\ell_1=\langle u_k,q_k^1\rangle$ and $\ell_2 \in\cL'_2$, the property $\mathfrak{D}_2''$ guarantees that there is a point $q\in \cA_1$ that does not belong to $\ell_1$. If $q\in \ell_2$, we are done. 
If this is not the case, then $\ell_2$ satisfies $\mathfrak{D}_1'$, so $q$ has a neighbor in $\cA_2$ that belongs to $\ell_2$. This point does not belong to $\ell_1$.
Finally, if $\ell_1=\langle u_k,q_k^1\rangle$ and $\ell_2 \in\cL'_j$ with $j \ge 3$, then they are distinct because $\ell_1$ satisfies $\mathfrak{D}_3''$ and $\ell_2$ satisfies $\mathfrak{D}_3$.
\end{proof}

Thanks to the assumption that no two points in $X$ share their $x$- or $y$-coordinate, an increasing line never coincides with a decreasing line. We have therefore shown that the lines in $\cL' \cup \cL_{0,decr} \cup \cL_{0,incr}$ are all distinct.

\begin{lemma}\label{size2}
 We have $|\cL'_1 \cup \cL_{0,decr} \cup \cL_{0,incr}| \ge |\cA_0 \cup \cA_1|-2$.
\end{lemma}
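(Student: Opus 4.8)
The plan is to count the three families $\cL'_1$, $\cL_{0,incr}$, $\cL_{0,decr}$ separately. By the distinctness results already proved in this section (together with the fact that an increasing line never coincides with a decreasing one here), the lines in $\cL'_1 \cup \cL_{0,incr} \cup \cL_{0,decr}$ are pairwise distinct, so it suffices to bound $|\cL'_1| + |\cL_{0,incr}| + |\cL_{0,decr}|$ from below and match the three summands against the pieces of $|\cA_0 \cup \cA_1|$. Using the disjoint decompositions $\cA_0 = A_{0,0} \cup A_{0,h} \cup A_{0,-} \cup C \cup D$ (with $A_{0,+} = C \cup D$) and $\cA_1 = A_{1,c} \cup A \cup B$ (with $A_{1,d} = A \cup B$ and $|A_{1,c}| = c_1$), the target reads $|\cA_0 \cup \cA_1| = |A_{0,0}| + |A_{0,h}| + |A_{0,-}| + |C| + |D| + c_1 + |A| + |B|$.

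The one genuinely nontrivial point — and the step I expect to be the obstacle — is the bound $|A_{0,0}| \le 1$. To see it, suppose $r_1 \ne r_2$ both lay in $A_{0,0}$; since neither belongs to any increasing pair, non-degeneracy forces every point of $X$ to form a decreasing pair with each of $r_1, r_2$, and a point that forms a decreasing pair with both defining points of a decreasing line must lie on that line (the decreasing-pair analogue of the first part of \pref{mon}), so $X \subseteq \langle r_1, r_2 \rangle$ would be collinear, a contradiction.

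After that the argument is bookkeeping, and I would record four bounds. (i) By $\mathfrak{I}_1''$ distinct $p$ give distinct lines of $\cL_{0,incr}$, so $|\cL_{0,incr}| \ge |A_{0,-}| + |A_{0,h}|$. (ii) By $\mathfrak{D}_1''$ the lines $\langle u_k, q_k^1 \rangle$ are pairwise distinct, so $|\cL_{0,decr}| = |C|$ when $s = |C| \ge 2$, and in all cases $|\cL_{0,decr}| \ge |C| - 1$ (if $s \le 1$ the family is empty, but then $|C| \le 1$). (iii) The increasing lines of $\cL_1$ are exactly the $\langle p, q \rangle$ with $p \in A_{0,+}$ and $q$ a $G_1$-neighbour of $p$; since every $G_1$-neighbour of an $A_{0,+}$-point lies in $A_{1,d}$, these are in bijection with $E(G')$, so together with the $\binom{c_1}{2}$ decreasing lines of $\cL_1$ and the line $\ell_1^+$ (present exactly when $c_1 \in \{1,2\}$) we get $|\cL'_1| \ge |E(G')| + \binom{c_1}{2} + [c_1 \in \{1,2\}] \ge |E(G')| + c_1$. (iv) $|E(G')| \ge |A| + |B| + |D|$: in the bipartite graph $G' = G_1[A_{1,d}, A_{0,+}]$ every vertex of $A$ has degree $1$ and every vertex of $B \cup C \cup D$ has degree $\ge 2$, while $D$ has no neighbour in $A$ (since $C$ already contains every neighbour of $A$); counting edges from the $A_{1,d}$-side gives $|E(G')| \ge |A| + 2|B|$, counting the $|A|$ edges incident to $A$ plus the $\ge 2|D|$ edges incident to $D$ (which all land in $B$) gives $|E(G')| \ge |A| + 2|D|$, and averaging the two gives (iv).

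Combining the four bounds, $|\cL'_1| + |\cL_{0,incr}| + |\cL_{0,decr}| \ge (|A| + |B| + |D| + c_1) + (|A_{0,-}| + |A_{0,h}|) + (|C| - 1) = |\cA_0 \cup \cA_1| - |A_{0,0}| - 1 \ge |\cA_0 \cup \cA_1| - 2$, the last inequality being the bound $|A_{0,0}| \le 1$. Apart from the observation on $|A_{0,0}|$, the only delicate points are the small exceptional regimes $c_1 \in \{1,2\}$ (already absorbed into the $c_1$ term) and $s = 1$ (where $\cL_{0,decr}$ is empty but $|C| = 1$); these, together with the single point that may sit in $A_{0,0}$, are exactly what accounts for the loss of $2$ in the statement.
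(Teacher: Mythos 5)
Your proof is correct and follows essentially the same route as the paper's: the same partitions of $\cA_0$ and $\cA_1$, the same bounds $|\cL_{0,incr}|\ge |A_{0,-}|+|A_{0,h}|$, $|\cL_{0,decr}|\ge |C|-1$, $|\cL'_1|\ge e(G')+c_1$, $e(G')\ge |A|+|B|+|D|$, and the key observation $|A_{0,0}|\le 1$, combined by the same arithmetic. The only differences are cosmetic: you derive the edge bound by averaging two degree counts instead of counting edges avoiding $A$ directly, and you spell out why two points of $A_{0,0}$ would force a line containing all of $X$.
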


\begin{proof}We partitioned $\cA_1$ into $A_{1,c}, A$, $B$, while $\cA_0$ is partitioned into $A_{0,0}, A_{0,h}, A_{0,-}, C,D$. The number of decreasing lines in $\cL'_1$ is at least $c_1=|A_{1,c}|$. The number of increasing lines in $\cL'_1$ is $e(G')$, where $G'=G_1[A_{1,d}, A_{0,+}]=G_1[A \cup B, C\cup D]$. 

Let us note that we have $|\cL_{0,incr}| \ge |A_{0,-}| +|A_{0,h}|$ and $|A_{0,0}| \le 1$. 
The first inequality follows from the definition of $\cL_{0,incr}$. For the second inequality observe that if $p_1, p_2$ belonged to $A_{0,0}$, then the decreasing line $\langle p_1,p_2\rangle$ would contain all points of $X$. 
The number of lines in $\cL_{0,decr}$ is $|C|$ if $|C| \neq 1$ and $0=|C|-1$ if $|C|=1$. 

We claim that $e(G') \ge |A| +|B|+|D|$. Vertices in $A$, by definition, have degree~1 in $G_1$, thus the number of edges adjacent to $A$ is exactly $|A|$ and these edges have their other endpoint in $C$. The degree of all vertices of $B \cup D$ is at least 2, thus the number of edges in $G'$ not adjacent to vertices in $A$ is at least $|B|+|D|$.
Adding all these inequalities we obtain that $|\cL'_1 \cup \cL_{0,decr} \cup \cL_{0,incr}| \ge |\cA_0 \cup \cA_1|-2$.
\end{proof}


\section{Non-degenerate sets: $n$ lines exist}
If $X=Y$, then we can improve Lemma~\ref{lem:notwo1} and obtain Theorem~\ref{thm:main1}.

\begin{proof}[Proof of Theorem~\ref{thm:main1}]
Putting together Lemma~\ref{lem:size3} with Lemma~\ref{size2}, we see that $X$ induces at least $n-2$ lines, and moreover the term $-2$ would disappear if we had $A_{0,0}=\emptyset$ and $|C|\neq 1$ in the proof of the Lemma~\ref{size2}. 

To overcome these last two problems, let us introduce three more partial orders on $X$. Let $(X,\preceq_1)=(X,\preceq)$, and define $(X,\preceq_2),(X,\preceq_3),(X,\preceq_4)$ by 
\begin{itemize}
\item
$p \preceq_2 q$ if and only if $q \preceq_1 p$,
\item
$p \preceq_3 q$ if and only if $p,q$ is a decreasing pair and $p$ has a smaller $x$-coordinate than $q$,
\item
$p \preceq_4 q$ if and only if $q \preceq_3 p$.
\end{itemize}

All the sets $\cL'_i$, as well as $\cL_{0,decr}$ and $\cL_{0,incr}$, can be defined with respect to the layers according to $(X,\preceq_j)$ for all $j=1,2,3,4$. We claim that for at least one of them $A_{0,0}=\emptyset$ and $|C|\neq 1$ must hold. Note first that a point in $A_{0,0}$ forms a decreasing pair with all other points in $X$ when $A_{0,0}$ is defined according to $(X,\preceq_1)$ or $(X,\preceq_2)$, while it forms an increasing pair with all other points in $X$ when $A_{0,0}$ is defined according to $(X,\preceq_3)$ or $(X,\preceq_4)$. Clearly, there cannot exist two points simultaneously with these properties. Thus we can assume that for, say, $(X,\preceq_1)$ and $(X,\preceq_2)$ the set $A_{0,0}$ is empty.

We still have to deal with the case where $|C|=1$. Let $p_1$ denote the only point in $C$. If $A_{0,-} \cup A_{0,+}$ contains other points besides $p_1$, then $u_1$ is defined and we obtain one decreasing line in $\cL_{0,decr}$. Otherwise, as $A_{0,-}$ is empty, we have $c_1=0$ and thus $\cL'_1$ does not contain any decreasing line. Therefore, if $A_{0,h}$ is not empty, then a decreasing line $\langle q, p_1\rangle$ with $q \in A_{0,h}$ contains all $\cA_0$ and thus is different from any other lines. If $A_{0,h}$ is empty, then we obtain $A_0=\{p_1\}$ meaning $p_1 \preceq q$ for any other point $q$. The same proof for $(X,\preceq_2)$ shows that we find an extra line unless there exists a point $p_2$ such that $p_2 \preceq_2 q$, or equivalently $q \preceq_1 p_2$, for any other point $q$. But then $\langle p_1,p_2 \rangle$ contains all points. Thus for one of the partial orders $(\preceq_1,X)$ or $(\preceq_2,X)$ we must have defined at least $|\cA_0 \cup \cA_1|$ distinct lines. 
\end{proof}

\begin{remark}
Note that all four partial orders are needed in the above proof. Consider the points $p_1=(x_1,y_1),p_2=(x_2,y_2),...,p_{n-1}=(x_{n-1},y_{n-1}),p=(x,y)$ such that $x<x_1<x_2<...<x_{n-1}$ and $y_1<y_2<...<y_{n-1}<y$  hold (this is the configuration mentioned in the introduction, showing that Theorem~\ref{thm:main1} is tight). Then only $(X,\preceq_4)$ will have the property that $A_{0,0}=\emptyset$ and $|C|\neq 1$.
\end{remark}

\vspace{3mm}
\noindent{\bf Acknowledgements}\\
We are grateful to Va\v{s}ek Chv\'{a}tal for his interest in our result and for many improvements that he suggested to the earlier version of this paper.

\vskip 0.3truecm

\bibliographystyle{siam}
\bibliography{ida}

\begin{thebibliography}{10}

\bibitem{BBCCCC}
{\sc L.~Beaudou, A.~Bondy, X.~Chen, E.~Chiniforooshan, M.~Chudnovsky,
  V.~Chv\'{a}tal, N.~Fraiman, and Y.~Zwols}, {\em A de {B}ruijn-{E}rd{\H{o}}s
  theorem for chordal graphs}, to appear.

\bibitem{BBCCC2}
\leavevmode\vrule height 2pt depth -1.6pt width 23pt, {\em Lines in
  hypergraphs}, to appear.

\bibitem{Chen76}
{\sc X.~Chen}, {\em The {S}ylvester-{C}hv\'atal theorem}, Discrete Comput.
  Geom., 35 (2006), pp.~193--199.

\bibitem{ChenChvatal08}
{\sc X.~Chen and V.~Chv{\'a}tal}, {\em Problems related to a de
  {B}ruijn-{E}rd{\H o}s theorem}, Discrete Appl. Math., 156 (2008),
  pp.~2101--2108.

\bibitem{ChiniChvatal11}
{\sc E.~Chiniforooshan and V.~Chv{\'a}tal}, {\em A de {B}ruijn-{E}rd{\H o}s
  theorem and metric spaces}, Discrete Math. Theor. Comput. Sci., 13 (2011),
  pp.~67--74.

\bibitem{Chvatal2}
{\sc V.~Chv\'{a}tal}, {\em A de {B}ruijn-{E}rd{\H{o}}s theorem for 1-2 metric
  spaces}, to appear.

\bibitem{Chvatal04}
{\sc V.~Chv{\'a}tal}, {\em Sylvester-{G}allai theorem and metric betweenness},
  Discrete Comput. Geom., 31 (2004), pp.~175--195.

\bibitem{deBE48}
{\sc N.~G. de~Bruijn and P.~Erd{\H{o}}s}, {\em On a combinatorial problem},
  Nederl. Akad. Wetensch., Proc., 51 (1948), pp.~1277--1279 = Indagationes
  Math. 10, 421--423 (1948).

\bibitem{Erdos1982}
{\sc P.~Erd{\H o}s}, {\em Personal reminiscences and remarks on the
  mathematical work of {T}ibor {G}allai}, Combinatorica, 2 (1982),
  pp.~207--212.

\bibitem{ESZ}
{\sc P.~Erd{\H o}s and G.~Szekeres}, {\em A combinatorial problem in geometry},
  Compositio Math., 2 (1935), pp.~463--470.

\bibitem{jirasekKlavik}
{\sc J.~Jir\'{a}sek and P.~Klav\'{i}­k}, {\em Structural and complexity
  aspects of line systems of graphs}, in Algorithms and Computation, O.~Cheong,
  K.-Y. Chwa, and K.~Park, eds., vol.~6506 of Lecture Notes in Computer
  Science, Springer Berlin / Heidelberg, 2010, pp.~157--168.

\bibitem{Menger28}
{\sc K.~Menger}, {\em Untersuchungen \"uber allgemeine {M}etrik}, Math. Ann.,
  100 (1928), pp.~75--163.

\bibitem{Sylvester}
{\sc J.~J. Sylvester}, {\em Mathematical question 11851}, Educational Times, 59
  (1893), p.~98.

\end{thebibliography}


\end{document}